\pgfplotsset{compat=1.15}
\newtheorem{theorem}{Theorem}[section]
\newtheorem{thm}[theorem]{Theorem}
\newtheorem{lemma}[theorem]{Lemma}
\newtheorem{remark}{Remark}[section]
\def\R{\mathbb R}
\def\N{\mathbb N}
\def\Z{\mathbb Z}
\numberwithin{equation}{section}
\DeclareMathOperator{\dive}{div}
\tikzset{every picture/.style={execute at begin picture={
   \shorthandoff{:;!?};}
}}
\begin{document}
\title[Primitive equations for the ocean and atmosphere]{From anisotropic Navier-Stokes equations to primitive equations for the ocean and atmosphere}

\author[Valentin Lemarié]{Valentin Lemarié}
 {\begin{center}
\begin{abstract} 
We study the well-posedness of the primitive equations for the ocean and atmosphere on two particular domains : a bounded domain $\Omega_1\mathrel{\mathop:}=(-1,1)^3$ with periodic boundary conditions and the strip $\Omega_2\mathrel{\mathop:}=\R^2\times(-1,1)$ with a periodic boundary condition for the vertical coordinate. An existence theorem for global solutions on a suitable Besov space is derived. 
Then, in a second step, we rigorously justify the passage to the limit from the rescaled anisotropic Navier-Stokes equations to these primitive equations in the same functional framework as that found for the solutions of the primitive equations. 
 \end{abstract}\end{center}}
\maketitle

\section{Introduction}
The primitive equations for the large-scale dynamics of the ocean and atmosphere were introduced in 1922 by L.F.Richardson \cite{Richardson} : the latter play a fundamental role in geophysical fluid dynamics \cite{Haltiner}, \cite{Lewandowski}, \cite{Majda}, \cite{Pedlosky}, \cite{Vallis}, \cite{Washington} and \cite{Zeng}. They were then applied to atmospheric models by Smagorinsky \cite{Smagorinsky} and oceanography by  Bryan \cite{Bryan}.
We refer to the various sources cited for the physical aspect of the system.

In this article, we will mathematically study these primitive equations for the ocean and atmosphere on $$\Omega_1\mathrel{\mathop:}=(-1,1)^3, \quad \text{or} \quad \Omega_2\mathrel{\mathop:}=\R^2\times(-1,1) :$$ \begin{equation}\label{Equations primitives}\left\{\begin{array}{l}
   \partial_t v + u\cdot \nabla v - \Delta v +\nabla_H p=0,  
   \\
    \partial_z p =0,
    \\
    \dive_H v+\partial_z w =0,
    \\ v \ \text{even (resp $w$ odd) w.r.t the vertical coordinate} \ z,
\end{array}\right.\end{equation}
where $u=(v,w)$ is periodic for $\Omega_1$ (resp. periodic w.r.t the vertical coordinate $z$ for $\Omega_2$) with $v$ the horizontal component and $w$ the vertical component, $\nabla_H\mathrel{\mathop:}= \begin{pmatrix}
    \partial_1 \\ \partial_2
\end{pmatrix}$ the horizontal gradient and $\dive_H V\mathrel{\mathop:}=\partial_1 V_1+\partial_2 V_2$ the horizontal divergence. 

We will refer to $\Omega$ the space domain (referring to $\Omega_1$ or to $\Omega_2$) and $\Omega_h$ (referring to $(-1,1)^2$ or  $\R^2$).

The mathematical analysis of these equations dates back to the work of J.-L. Lions, Temam and Wang \cite{Lions1}, \cite{Lions2}, \cite{Lions3} in the 1990s, who studied the existence of global weak solutions (without uniqueness) for these equations coupled to the temperature equation on a spherical envelope. Other results have been proved for the primitive equations by adding a Coriolis force: for initial data in $H^1$, Guillén-Gonzalez, Masmoudi and Rodriguez-Bellido \cite{Guillen} proved the local well-posedness of the problem and later with an energy bound $H^1$, Cao and Titi \cite{Titi} obtained the globally well-posed character of strong solutions in dimension 3 in a more general framework where temperature is considered. 

More recently, results of global solutions in spaces of type $L^2$ (based on maximum regularity techniques) have been obtained by Hieber et al. \cite{Hieber1}, \cite{Hieber2} and Giga et.al \cite{Giga1}, \cite{Giga2} who consider the system \eqref{Equations primitives}.

All these results have been proved on a bounded domain with periodic boundary conditions, a lot of regularity and the solutions are only local in time. We propose here a study for an initial data in the Besov space $\dot B_{2,1}^{\frac{1}{2}}\cap\dot B_{2,1}^{\frac{3}{2}}$. We prove the existence and uniqueness of global solutions on the $\Omega$ domain, possibly unbounded horizontally, where we impose conditions on the vertical component (a periodic condition on this direction and a parity condition on the vertical component of the solution). 

Secondly, we want to rigorously justify the hydrostatic approximation : the system \eqref{Equations primitives} can be formally obtained from the Navier-Stokes equations as follows. 
Let us consider the anisotropic Navier-Stokes equations on the thin domain $\Omega_{1,\varepsilon}=(-1,1)^2\times(-\varepsilon,\varepsilon)$ or $\Omega_{2,\varepsilon}=\R^2\times (-\varepsilon,\varepsilon)$  : 
\begin{eqnarray}\label{NS}\left\{\begin{array}{l}
    \partial_t \tilde{u}+\tilde{u}\cdot \nabla \tilde{u}-\mu_H  \Delta_H \tilde{u}-\mu_z \partial_z^2 \tilde{u}+\nabla \tilde{p}=0 \\ \dive \tilde{u}=0
\end{array} \right.\end{eqnarray} with $\mu_H=1$ and $\mu_z=\varepsilon^2$. Introducing new unknowns $$\displaylines{v_\varepsilon(x,y,z,t)\mathrel{\mathop:}=(\tilde{u}_1,\tilde{u}_2)(x,y,\varepsilon z,t), \ w_\varepsilon(x,y,z,t)\mathrel{\mathop:}=\varepsilon^{-1}\tilde{u}_3(x,y,\varepsilon z,t), \cr
 u_\varepsilon\mathrel{\mathop:}=(v_\varepsilon,w_\varepsilon), \ p_\varepsilon(x,y,z,t)\mathrel{\mathop:}=\tilde{p}(x,y,\varepsilon z,t),}$$ we can rewrite \eqref{NS} like
\begin{equation}\label{NS remise à l'échelle}
    \left\{\begin{array}{l}
     \partial_t v_\varepsilon+u_\varepsilon\cdot \nabla v_\varepsilon-\Delta v_\varepsilon+\nabla_H p_\varepsilon=0  \\
     \varepsilon^2\left(\partial_t w_\varepsilon+u_\varepsilon\cdot \nabla w_\varepsilon-\Delta w_\varepsilon\right)+\partial_z p_\varepsilon=0 \\
     \dive u_\varepsilon=0 \\
     v_\varepsilon \ \text{even (resp $w_\varepsilon$ odd) w.r.t the vertical coordinate} \ z,
\end{array}\right. \end{equation} on the domain $\Omega$ independent of $\varepsilon$ with the same periodicity condition on $u_\varepsilon$ as system \eqref{Equations primitives}.

Formally, taking the limit when $\varepsilon$ tends to 0 in \eqref{NS remise à l'échelle}, we obtain the primitive equations \eqref{Equations primitives}.

On the 3-dimensional torus, this passage to the limit has been justified locally in time by Hieber et al. in \cite{article de référence} with techniques using maximum parabolic regularity. We obtain here a justification on the same space as the study of primitive equations, globally in time and for less regular data. 

\section{Main results and strategy of proof}
In this section, we first explain notations and definitions used in this article, describe the results obtained and the respective proof strategies.
\subsection{Notations and definitions}~\\
Before setting out the main results of this article, we briefly introduce the various notations and definitions used throughout. 
We will refer to $C>0$ a constant independent of $\varepsilon$ and of time and $f\lesssim g$ will mean $f\leq C g$. For all Banach space $X$ and all functions $f,g\in X$, we set up $\|(f,g)\|_X\mathrel{\mathop:=}\|f\|_X+\|g\|_X$. We denote by $L^2(\R_+;X)$ the set of measurable functions $f:[0,+\infty[\rightarrow X$ such that $t\mapsto \|f(t)\|_{X}$ is in $L^2(\R_+)$ and let us write $\|\cdot \|_{L^2(X)}\mathrel{\mathop:}=\|\cdot \|_{L^2(\R_+;X)}$.

We describe in the appendix the construction and properties of Besov spaces.
\subsection{Main result}
In this article, we prove the following theorem: 
\begin{thm}\label{BIG theorem}
    Let us consider the system \eqref{NS remise à l'échelle} for $\varepsilon>0$. 
    
    Then there exists a positive constant $\alpha$ (independent of $\varepsilon$) such that for all initial data $u_0=(v_0,w_0)$ where $v_0 \in \dot B_{2,1}^{\frac{1}{2}}\cap \dot B_{2,1}^{\frac{3}{2}}$ and $\overline{u}_0=(\overline{v}_0,\overline{w}_0)$ satisfying:  

    \begin{equation}\label{condition de petitesse système limite}
    \begin{aligned} \|v_0\|_{\dot B_{2,1}^{\frac{1}{2}}}+\|v_0\|_{\dot B_{2,1}^{\frac{3}{2}}} & \leq \alpha, \quad \text{and} \quad \dive u_0=0 \\
    \|\overline{v}_0\|_{\dot B_{2,1}^{\frac{1}{2}}}+\|\overline{v}_0\|_{\dot B_{2,1}^{\frac{3}{2}}} & \leq \alpha \quad \text{and} \quad \dive \overline{u}_0=0, 
    \end{aligned}
    \end{equation} with $v_0$ and $\overline{v}_0$ even (resp. $w_0$ and $\overline{w}_0$ odd) with respect to the vertical coordinate $z$, 
    the system \eqref{Equations primitives} with initial data $u_0$ admits a unique global-in-time solution $(u,p)$ with $u=(v,w)$ where $v$ is in the set $E$ defined by 
    
    \begin{eqnarray}\label{définition espace fonctionnel E} E\mathrel{\mathop:}=\mathcal{C}_b\big(\R_+;\dot B_{2,1}^{\frac{1}{2}}\cap \dot B_{2,1}^{\frac{3}{2}}\big)\cap \ L^1\big(\R_+;\dot B_{2,1}^{\frac{5}{2}}\cap \dot B_{2,1}^{\frac{7}{2}}\big),\end{eqnarray} and $\nabla_H p$ in $L^1\left(\R_+;\dot B_{2,1}^{\frac{1}{2}}\cap \dot B_{2,1}^{\frac{3}{2}}\right)$ verifying the following inequality for all $t\in\R_+$ : 
    \begin{equation}\label{estimee finale systeme limite besov}\|v(t)\|_{\dot B_{2,1}^{\frac{1}{2}}\cap \dot B_{2,1}^{\frac{3}{2}}}+\int_0^t \big(\|v\|_{\dot B_{2,1}^{\frac{5}{2}}\cap \dot B_{2,1}^{\frac{7}{2}}}+\|\nabla_H p\|_{\dot B_{2,1}^{\frac{1}{2}}\cap \dot B_{2,1}^{\frac{3}{2}}}\big) d\tau\leq C\|v_0\|_{\dot B_{2,1}^{\frac{1}{2}}\cap \dot B_{2,1}^{\frac{3}{2}} },\end{equation}
    and the system \eqref{NS remise à l'échelle} with inital data $\overline{u}_0$ admits a unique global-in-time solution $(u_\varepsilon, p_\varepsilon) $ with  $u_\varepsilon=(v_\varepsilon,w_\varepsilon)$ where $v_\varepsilon$ is in the set $E$ and $(\nabla_H, \varepsilon^{-1}\partial_z) p_\varepsilon$ in $L^1\left(\R_+;\dot B_{2,1}^{\frac{1}{2}}\cap \dot B_{2,1}^{\frac{3}{2}}\right)$ verifying for all $t\in\R_+$ : $$\displaylines{\|v_\varepsilon(t)\|_{\dot B_{2,1}^{\frac{1}{2}}\cap \dot B_{2,1}^{\frac{3}{2}}}+\int_0^t \left(\|v_\varepsilon\|_{\dot B_{2,1}^{\frac{5}{2}}\cap \dot B_{2,1}^{\frac{7}{2}}}+\|(\nabla_H, \varepsilon^{-1}\partial_z) p_\varepsilon\|_{\dot B_{2,1}^{\frac{1}{2}}\cap \dot B_{2,1}^{\frac{3}{2}}}\right) d\tau \hfill\cr\hfill \leq \|\overline{v}_0\|_{\dot B_{2,1}^{\frac{1}{2}}\cap\dot B_{2,1}^{\frac{3}{2}}}. }$$

    If, moreover, $\|\overline{v}_0-v_0\|_{\dot B_{2,1}^{\frac{1}{2}}\cap \dot B_{2,1}^{\frac{3}{2}}}\leq C\varepsilon$ then we have : \begin{eqnarray}\label{taux de convergence}\|v_\varepsilon-v\|_{L^\infty(\R_+;\dot B_{2,1}^{\frac{1}{2}}\cap \dot B_{2,1}^\frac{3}{2})\cap L^1(\R_+;\dot B_{2,1}^{\frac{5}{2}}\cap \dot B_{2,1}^\frac{7}{2})}  \lesssim \varepsilon.\end{eqnarray}
\end{thm}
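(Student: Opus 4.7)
My plan is to treat the three claims of the theorem sequentially using the same Littlewood--Paley machinery. For both the limit system \eqref{Equations primitives} and the rescaled system \eqref{NS remise à l'échelle} I would set up a Friedrichs approximation (spectral truncation) that yields smooth global solutions of ODEs in $L^2$-based profiles, derive uniform \emph{a priori} estimates in the space $E$ using the smallness of the data, pass to the limit by compactness, and obtain uniqueness by a stability estimate in a weaker norm. The choice $\dot B_{2,1}^{1/2}\cap \dot B_{2,1}^{3/2}$ is tailored to this: the lower index is scaling-invariant for 3-D Navier--Stokes, while the higher one embeds into $L^\infty$, giving just enough control to handle the transport terms $u\cdot\nabla v$.

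\textbf{Step 1 --- The limit system.} For \eqref{Equations primitives} two structural facts reduce everything to a closed equation on $v$: since $\partial_z p = 0$ the pressure depends only on $(x_h,t)$, and since $\dive_H v+\partial_z w=0$ with $w$ odd in $z$ the vertical velocity is reconstructed as $w(x_h,z,t)=-\int_0^z \dive_H v(x_h,z',t)\,dz'$. Averaging the horizontal momentum equation in $z$ yields a 2-D elliptic problem $-\Delta_H p = \dive_H \overline{(u\cdot\nabla v)}$ that recovers $\nabla_H p$ from $v$. Applying each dyadic block and using the standard maximal-regularity gain of two derivatives for the heat semigroup in $\dot B_{2,1}^s$, together with paraproduct estimates of type $\dot B_{2,1}^{3/2}\cdot \dot B_{2,1}^{s}\hookrightarrow \dot B_{2,1}^{s}$ for the relevant $s$, I would close a bound of the form $\|v\|_E \lesssim \|v_0\|_{\dot B_{2,1}^{1/2}\cap \dot B_{2,1}^{3/2}}+\|v\|_E^2$, and conclude by a bootstrap argument under the smallness hypothesis on $v_0$.

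\textbf{Step 2 --- Uniform estimates for the rescaled NS.} The technically demanding point is the $\varepsilon$-uniform bound for \eqref{NS remise à l'échelle}. I would decompose $v_\varepsilon = \overline{v}_\varepsilon + \widetilde{v}_\varepsilon$ into barotropic (vertical average) and baroclinic parts: the barotropic part essentially solves a 2-D Navier--Stokes type problem, while the baroclinic part is strongly damped by the $\partial_z^2$ diffusion because its vertical Fourier modes are bounded below. To recover the pressure, applying $\dive$ to the velocity equations combined with the vertical momentum equation gives an anisotropic elliptic problem with symbol $|\xi_H|^2 + \varepsilon^{-2}|\xi_z|^2$; one checks that the multipliers controlling $(\nabla_H,\varepsilon^{-1}\partial_z)p_\varepsilon$ in $\dot B_{2,1}^s$ are bounded \emph{uniformly in $\varepsilon$}. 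Combined with the parabolic gain and product estimates of Step 1, this closes a bound on $v_\varepsilon$ in $E$ with a constant independent of $\varepsilon$. I expect this uniform pressure analysis, together with the control of how $w_\varepsilon$ enters the convection term $u_\varepsilon\cdot\nabla v_\varepsilon$ without losing powers of $\varepsilon$, to be the main technical obstacle.

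\textbf{Step 3 --- Convergence rate.} With both solutions globally defined and uniformly controlled, I would write the equations for $\delta_\varepsilon \mathrel{\mathop:}= v_\varepsilon - v$ and the pressure difference $\pi_\varepsilon \mathrel{\mathop:}= p_\varepsilon - p$. The right-hand side splits into bilinear terms that are linear in $\delta_\varepsilon$ modulo factors of bounded size in $E$, plus an explicit source arising from the terms $\varepsilon^2(\partial_t w_\varepsilon + u_\varepsilon\cdot\nabla w_\varepsilon - \Delta w_\varepsilon)$ that formally vanish in the limit. Running the same maximal-regularity estimate on $\delta_\varepsilon$ and absorbing the linearized nonlinearity by smallness yields
\[
\|\delta_\varepsilon\|_{L^\infty_t(\dot B_{2,1}^{1/2}\cap\dot B_{2,1}^{3/2})\cap L^1_t(\dot B_{2,1}^{5/2}\cap\dot B_{2,1}^{7/2})} \lesssim \|\overline{v}_0-v_0\|_{\dot B_{2,1}^{1/2}\cap\dot B_{2,1}^{3/2}} + \varepsilon,
\]
and the hypothesis $\|\overline{v}_0-v_0\|\lesssim\varepsilon$ gives \eqref{taux de convergence}. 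The subtle point here is to confirm that the source term is genuinely $O(\varepsilon)$ (rather than $O(1)$) in $L^1_t(\dot B_{2,1}^{1/2}\cap \dot B_{2,1}^{3/2})$, which is where the $\varepsilon$-uniform control obtained in Step 2 is used in an essential way.
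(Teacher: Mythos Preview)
Your overall strategy matches the paper's closely: Friedrichs approximation, Littlewood--Paley energy estimates with product laws in $\dot B_{2,1}^s$, a smallness/bootstrap closure, and a stability argument for uniqueness and convergence. Your identification of the anisotropic pressure symbol $|\xi_H|^2+\varepsilon^{-2}\xi_z^2$ and the need for $\varepsilon$-uniform multiplier bounds is exactly what the paper formalizes as the fact that $-\nabla_\varepsilon(-\Delta_\varepsilon)^{-1}\dive_\varepsilon$ is an $L^2$-orthogonal projector (Lemma~\ref{continuité opérateur de Leray}).

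The one genuine divergence is in Step~2. You propose a barotropic/baroclinic splitting of $v_\varepsilon$ to handle the vertical velocity uniformly. The paper does \emph{not} do this; instead it exploits the parity hypothesis directly. Since $w_\varepsilon$ is odd in $z$ and $\partial_z w_\varepsilon=-\dive_H v_\varepsilon$, the Poincar\'e-type bound of Lemma~\ref{inégalité poincaré-wirtinger} gives
\[
\|w_\varepsilon\|_{\dot B_{2,1}^{s}}\le \|\partial_z w_\varepsilon\|_{\dot B_{2,1}^{s}}=\|\dive_H v_\varepsilon\|_{\dot B_{2,1}^{s}}\lesssim \|v_\varepsilon\|_{\dot B_{2,1}^{s+1}},
\]
with a constant \emph{independent of $\varepsilon$}. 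This single line dissolves what you flagged as the ``main technical obstacle'': the convection term $w_\varepsilon\partial_z v_\varepsilon$ is estimated exactly like $v_\varepsilon\cdot\nabla_H v_\varepsilon$ at the cost of one extra derivative on $v_\varepsilon$, which is why the paper works in $\dot B_{2,1}^{1/2}\cap\dot B_{2,1}^{3/2}$ rather than $\dot B_{2,1}^{1/2}$ alone. Your barotropic/baroclinic route is a valid alternative (it is the mechanism behind Cao--Titi type arguments), but it is heavier machinery than needed here and would force you to track the coupling terms between the two modes in Besov norms; the paper's Poincar\'e shortcut bypasses all of that.

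For Step~3 your plan is essentially the paper's: write the system for $V_\varepsilon=v_\varepsilon-v$ (and $\varepsilon W_\varepsilon$), eliminate the pressure via the anisotropic projector, and absorb the bilinear terms using the smallness $\alpha$ already established. Your description of the $O(\varepsilon)$ source as $\varepsilon^2(\partial_t w_\varepsilon+u_\varepsilon\cdot\nabla w_\varepsilon-\Delta w_\varepsilon)$ is equivalent, via the vertical momentum equation, to the paper's observation that $\partial_z p_\varepsilon=O(\varepsilon)$ in $L^1_t(\dot B_{2,1}^{1/2}\cap\dot B_{2,1}^{3/2})$; the paper phrases the residual as $\varepsilon F$ with $F$ built from $w$ rather than $w_\varepsilon$, but the bookkeeping is the same.
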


\begin{remark}
    The estimate \eqref{taux de convergence} gives us the information that $w_\varepsilon$ converges weakly to $w$ in $L^\infty(\R_+; \dot B_{2,1}^{\frac{1}{2}})\cap L^1(\R^+; \dot B_{2,1}^{\frac{5}{2}})$ since we have, by Lemma \eqref{inégalité poincaré-wirtinger} and the condition of divergence free, $$\|w_\varepsilon-w\|_{\dot B_{2,1}^s}\leq \|\partial_z w_\varepsilon-w\|_{\dot B_{2,1}^s}=\|\dive_H (v_\varepsilon-v)\|_{\dot B_{2,1}^{s}}\lesssim \|v_\varepsilon-v\|_{\dot B_{2,1}^{s+1}}.$$
\end{remark}

\subsection{Sketch of the proof}~\\
We divide the proof of this result into three parts. In the first two subsections, we focus on the well-posedness of these two systems, and prove more precisely that for small enough initial data, these systems (studied in $E$) admit a unique global-in-time solution. 

In the final subsection, we prove the convergence of the solutions.

To do this, we will divide the proof of the well-posedness of the systems into three parts. The first (and most important) step is to assume that we have a regular enough solution, localize our system with the dyadic blocks and deduce the associated classical energy estimates, which are obtained by taking the scalar product in $L^2$ of the system with the localized solution and using integrations by parts and various properties of this system: we then deduce the a priori estimates. 

Once the a priori estimates are available, we use a classic approximation scheme to obtain the existence theorem for global solutions in time: this is Friedrichs' method (presented in \cite{BCD}).

For uniqueness, we look at the system verified by the difference of two solutions and derive an estimate, and end the proof of uniqueness with Grönwall's lemma.

Concerning the proof of convergence of solutions, using the fact that $\partial_z p_\varepsilon=\mathcal{O}(\varepsilon)$ in $L^1(\R_+; \dot B_{2,1}^{\frac{1}{2}})$ for the pressure, we deduce by studying the estimates verified by the difference of the two solutions of the system that we have $(v_\varepsilon,\varepsilon w_\varepsilon)-(v,w)=\mathcal{O}(\varepsilon)$ in $E$.

\section{Proof of the results}
Firstly, let us look at the study of primitive equations.
\subsection{Study of primitive equations for the ocean and atmosphere}
In this subsection, we focus on the result of Theorem \ref{BIG theorem} about the well-posedness and uniqueness of the system \eqref{Equations primitives}.

Let us begin by finding the a priori estimates \eqref{estimee finale systeme limite besov} associated to the system.
\subsubsection{A priori estimates}~\\
We assume that we have at our disposal a sufficiently regular solution of the system.

First, we will deduce from the classical energy method, an estimate on $v$. 

To do so, we apply the localization operator $\dot\Delta_j$ to the system \eqref{Equations primitives}. We get : 
\begin{eqnarray}\label{Equations primitives localisées}\left\{\begin{array}{l}
   \partial_t v_j + \dot\Delta_j\left( u\cdot \nabla v\right) - \Delta v_j +\nabla_H p_j=0,  
   \\
    \partial_z p_j =0,
    \\
    \dive_h v_j+\partial_z w_j =0.
\end{array}\right.\end{eqnarray}
By taking the product scalar with $v_j$ in the first equation of \eqref{Equations primitives localisées},  we have by integration by parts for the measure $dX=d(x,y,z)$: $$\frac{1}{2}\frac{d}{dt}\|v_j\|_{L^2}^2+\|\nabla v_j\|_{L^2}^2=-\int_{\Omega}\nabla_H p_j\cdot v_jdX+\int_\Omega \dot\Delta_j (u\cdot \nabla v)\cdot v_jdX.$$
From the last two equations of \eqref{Equations primitives localisées}, we deduce by integration by parts : \begin{equation}\label{intégrale pression nulle}-\int_\Omega \nabla_H p_j\cdot v_j dX=\int_\Omega p_j \dive_H v_j dX=-\int_\Omega p_j \partial_z w_j dX=\int_\Omega \partial_z p_j w_j dX =0.\end{equation}
By the Cauchy-Schwarz inequality, we therefore deduce : $$\frac{1}{2}\frac{d}{dt}\|v_j\|_{L^2}^2+\|\nabla v_j\|_{L^2}^2=\int_\Omega \dot\Delta_j (u\cdot \nabla v)\cdot v_jdX\leq \|\dot \Delta_j (u\cdot \nabla v)\|_{L^2}\|v_j\|_{L^2}.$$

By Bernstein's lemma (see \cite{BCD}), we have $\|\nabla v_j\|_{L^2}\simeq 2^j \|v_j\|_{L^2}$.

By Lemma \ref{lemme edo}, we then obtain : $$\|v_j(t)\|_{L^2}+c\int_0^t 2^{2j}\|v_j\|_{L^2}d\tau\leq \|v_{j,0}\|_{L^2}+\int_0^t \|\dot \Delta_j(u\cdot \nabla v)\|_{L^2}d\tau.$$

By multiplyling by $2^{js}$ with $s\in\R$ and summing up on $j\in\Z$, we then deduce : $$\|v(t)\|_{\dot B_{2,1}^{s}}+c\int_0^t \|v\|_{\dot B_{2,1}^{s+2}}d\tau\leq \|v_0\|_{\dot B_{2,1}^{s}}+\int_0^t \|u\cdot \nabla v\|_{\dot B_{2,1}^s}d\tau.$$

By using $\dive_H v+\partial_z w=0$, $w$ is odd and the Poincaré's inequality \eqref{inégalité poincaré-wirtinger}, we then deduce : \begin{eqnarray}\label{lien besov $v$ et $w$}
    \|w\|_{\dot B_{2,1}^{s}}\leq \|\partial_z w\|_{\dot B_{2,1}^{s}}= \|\dive_H v\|_{\dot B_{2,1}^{s}}\lesssim \| v\|_{\dot B_{2,1}^{s+1}}.\end{eqnarray}

Let us take $s=\frac{1}{2}$ in a first time. By the product laws of Lemma \ref{Produit espace de Besov} and by \eqref{lien besov $v$ et $w$}, we get : $$\|v\cdot \nabla_H v\|_{\dot B_{2,1}^{\frac{1}{2}}}\lesssim \|v\|_{\dot B_{2,1}^{\frac{1}{2}}}\|\nabla_H v\|_{\dot B_{2,1}^{\frac{3}{2}}}\lesssim \|v\|_{\dot B_{2,1}^{\frac{1}{2}}}\|v\|_{\dot B_{2,1}^{\frac{5}{2}}},$$ and $$\|w\partial_z v\|_{\dot B_{2,1}^{\frac{1}{2}}}\lesssim \|w\|_{\dot B_{2,1}^{\frac{1}{2}}}\|\partial_z v\|_{\dot B_{2,1}^{\frac{3}{2}}}\lesssim \|v\|_{\dot B_{2,1}^{\frac{3}{2}}}\|v\|_{\dot B_{2,1}^{\frac{5}{2}}}.$$

So we have : $$\|v(t)\|_{\dot B_{2,1}^{\frac{1}{2}}}+c\int_0^t \|v\|_{\dot B_{2,1}^{\frac{5}{2}}}d\tau \lesssim \|v_0\|_{\dot B_{2,1}^{\frac{1}{2}}}+C\int_0^t \|v\|_{\dot B_{2,1}^{\frac{1}{2}}\cap \dot B_{2,1}^{\frac{3}{2}}} \|v\|_{\dot B_{2,1}^{\frac{5}{2}}} d\tau.$$

Now taking $s=\frac{3}{2}$, we have by the product laws of Lemma \ref{Produit espace de Besov} and by \eqref{lien besov $v$ et $w$} :  $$\|v\cdot \nabla_H v\|_{\dot B_{2,1}^{\frac{3}{2}}}\lesssim \|v\|_{\dot B_{2,1}^{\frac{3}{2}}}\|\nabla_H v\|_{\dot B_{2,1}^{\frac{3}{2}}}\lesssim \|v\|_{\dot B_{2,1}^{\frac{3}{2}}}\|v\|_{\dot B_{2,1}^{\frac{5}{2}}},$$ and $$\|w\partial_z v\|_{\dot B_{2,1}^{\frac{3}{2}}}\lesssim \|w\|_{\dot B_{2,1}^{\frac{3}{2}}}\|\partial_z v\|_{\dot B_{2,1}^{\frac{3}{2}}}\lesssim \|v\|_{\dot B_{2,1}^{\frac{5}{2}}}^2.$$

So we have: $$\|v(t)\|_{\dot B_{2,1}^{\frac{3}{2}}}+c\int_0^t \|v\|_{\dot B_{2,1}^{\frac{7}{2}}}d\tau \lesssim \|v_0\|_{\dot B_{2,1}^{\frac{3}{2}}}+C\int_0^t \left(\|v\|_{\dot B_{2,1}^{\frac{3}{2}}}\|v\|_{\dot B_{2,1}^{\frac{5}{2}}} +\|v\|_{\dot B_{2,1}^{\frac{5}{2}}}^2\right) d\tau.$$

Summing up the inequalities for $s=\frac{1}{2}$ and $s=\frac{3}{2}$, we obtain : $$\displaylines{\|v(t)\|_{\dot B_{2,1}^{\frac{1}{2}}\cap \dot B_{2,1}^{\frac{3}{2}}}+c\int_0^t \|v\|_{\dot B_{2,1}^{\frac{5}{2}}\cap \dot B_{2,1}^{\frac{7}{2}}} d\tau\lesssim  \|v_0\|_{\dot B_{2,1}^{\frac{1}{2}}\cap \dot B_{2,1}^{\frac{3}{2}}} \hfill\cr\hfill +C\int_0^t \bigg(\|v\|_{\dot B_{2,1}^{\frac{1}{2}}\cap \dot B_{2,1}^{\frac{3}{2}}}\|v\|_{\dot B_{2,1}^{\frac{5}{2}}} +\|v\|_{\dot B_{2,1}^{\frac{5}{2}}}^2\bigg) d\tau.} $$

By interpolation, we have : $$\|v\|_{\dot B_{2,1}^{\frac{5}{2}}}^2\lesssim \|v\|_{\dot B_{2,1}^{\frac{3}{2}}}\|v\|_{\dot B_{2,1}^{\frac{7}{2}}}.$$

Setting $$\mathcal{A}(t)\mathrel{\mathop:}=  \|v(t)\|_{\dot B_{2,1}^{\frac{1}{2}}\cap \dot B_{2,1}^{\frac{3}{2}}}, \quad \mathcal{B}(t)\mathrel{\mathop:}= \|v(t)\|_{\dot B_{2,1}^{\frac{5}{2}}\cap \dot B_{2,1}^{\frac{7}{2}}},$$ we conclude to the following inequality : $$\mathcal{A}(t)+c\int_0^t \mathcal{B}(\tau) d\tau \leq \mathcal{A}(0)+C\int_0^t \mathcal{A}(\tau)\mathcal{B}(\tau) d\tau.  $$

Then, we have by Lemma \ref{lemme edo2} for a small initial condition : $$\mathcal{A}(t)+\frac{c}{2}\int_0^t \mathcal{B}(\tau) d\tau\leq \mathcal{A}(0).$$

Now let us estimate the pressure term.
\begin{lemma} The pressure may be defined :
\begin{eqnarray}\label{pression pour équations primitives}
\displaystyle p=\frac{1}{2}\int_{-1}^1 (-\Delta)^{-1}\dive_H(u\cdot \nabla v)dz'. 
\end{eqnarray}
Furthermore, it verifies : \begin{eqnarray}\label{pression pour équations primitives 2}\int_{-1}^1 \dive_H(\nabla_H p)dz'=-\int_{-1}^1 \dive_H (u\cdot \nabla v)dz'.\end{eqnarray}

\end{lemma}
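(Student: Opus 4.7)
The plan is to derive both identities from the horizontal divergence of the momentum equation integrated along the vertical direction. Applying $\dive_H$ to the first equation of \eqref{Equations primitives} gives
\[
\partial_t \dive_H v + \dive_H(u\cdot \nabla v) - \Delta \dive_H v + \Delta_H p = 0,
\]
and I then integrate this relation in $z'$ over $(-1,1)$ so that each term becomes a vertical average.

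The main technical point is that three of the four $z$-averages vanish thanks to the parity and periodicity conditions. From $\dive_H v = -\partial_z w$, combined with the periodicity of $w$ in $z$ and its odd parity, one gets $w(\cdot,\pm 1)=0$, hence $\int_{-1}^{1}\dive_H v\, dz' = 0$, which kills the $\partial_t$ and the $\Delta_H$ contributions. For $\int_{-1}^{1}\partial_z^2 \dive_H v\, dz'$, an integration by parts in $z$ reduces it to $\dive_H\bigl([\partial_z v]_{-1}^{1}\bigr)$; since $v$ is even in $z$, $\partial_z v$ is odd, so combined with periodicity this bracket is zero. After these simplifications the only surviving identity is $2\Delta_H p + \int_{-1}^{1}\dive_H(u\cdot\nabla v)\, dz' = 0$. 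Because $\partial_z p = 0$, the left-hand side also equals $\int_{-1}^{1}\dive_H(\nabla_H p)\, dz'$, which is exactly \eqref{pression pour équations primitives 2}.

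To pass to the explicit formula \eqref{pression pour équations primitives} I invert the horizontal Laplacian. Setting $G := (-\Delta)^{-1}\dive_H(u\cdot\nabla v)$, the identity $-\Delta_H\bigl(\tfrac{1}{2}\int_{-1}^{1}G\, dz'\bigr) = \tfrac{1}{2}\int_{-1}^{1}\dive_H(u\cdot\nabla v)\, dz'$ requires $\int_{-1}^{1}\partial_z^2 G\, dz'=0$, which follows from the same symmetry argument: $u\cdot\nabla v$ is even in $z$ (as a sum of products of two factors sharing the same parity), $G$ inherits this parity, $\partial_z G$ is odd, and periodicity forces $[\partial_z G]_{-1}^{1}=0$. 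The remaining subtlety, which I expect to be the main obstacle, is the well-posedness of $(-\Delta_H)^{-1}$ in the functional framework: on $\Omega_1$ one works modulo constants, noting that the integrated right-hand side has zero horizontal mean; on $\Omega_2=\R^2\times(-1,1)$ this is handled in the homogeneous Besov realization recalled in the appendix, which is precisely the setting in which the later estimates on $\nabla_H p$ are performed.
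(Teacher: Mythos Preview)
Your argument is correct and is essentially the same as the paper's: both amount to integrating the horizontal divergence of the momentum equation over $(-1,1)$ and showing that the linear terms drop out. The only cosmetic difference is that the paper packages the cancellation as $[\partial_t w-\Delta w]_{z=-1}^{z=1}=0$ by periodicity and then converts via $\partial_z w=-\dive_H v$, whereas you split $\Delta=\Delta_H+\partial_z^2$ and invoke the parity of $v$ and $w$ term by term; these are equivalent observations. Your treatment of the passage from \eqref{pression pour équations primitives 2} to \eqref{pression pour équations primitives} is in fact more careful than the paper's ``whence'': you explicitly check that $\int_{-1}^{1}\partial_z^2 G\,dz'=0$ so that the vertical average of $(-\Delta)^{-1}$ agrees with $(-\Delta_H)^{-1}$ of the averaged source, a point the paper leaves implicit.
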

\begin{proof}
By the periodicity on the vertical component, by the zero divergence condition on $u$ and by the first equation of \eqref{Equations primitives}, we have :
\begin{align*} 0 & =\partial_t w(x,y,1)-\Delta w(x,y,1)-\left(\partial_t w(x,y,-1)-\Delta w(x,y,-1)\right) \\ & = \int_{-1}^1 (\partial_t \partial_z w-\Delta \partial_z w ) dz' \\ &  = -\int_{-1}^{1} \dive_H (\partial_t v -\Delta v)dz' \\ & =\int_{-1}^{1} \dive_H\left(\nabla_H p+u\cdot \nabla v\right)dz' .
\end{align*}

We then obtain : $$\int_{-1}^1 \dive_H(\nabla_H p)dz'=-\int_{-1}^1 \dive_H (u\cdot \nabla v)dz'.$$

But $\partial_z p=0$, so we have $$2\Delta p=-\int_{-1}^1 \dive_H (u\cdot \nabla v)dz',$$ whence \eqref{pression pour équations primitives}.
\end{proof}
By applying the operator $\dot\Delta_j$ to \eqref{pression pour équations primitives 2}, by taking the scalar product with $p_{j}$ and by integration by parts, we have : $$2\|\nabla_H p_{j}\|_{L^2}^2=\int_{\Omega}\int_{-1}^1 \dive_H\left(\dot\Delta_j(u\cdot\nabla v)\right)dz' p_{j}dX.$$

By integration by parts and the Cauchy-Schwarz inequality, we have : $$2\|\nabla_H p_{j}\|_{L^2}^2\leq \|\dot \Delta_j (u\cdot \nabla v)\|_{L^2} \|\nabla_H p_{j}\|_{L^2}.$$
We then obtain : \begin{eqnarray} \label{estimée terme de pression }\|\nabla_H p_{j}\|_{L^2}\lesssim \|\dot \Delta_j (u\cdot \nabla v)\|_{L^2}.\end{eqnarray}

We then have the product laws, \eqref{lien besov $v$ et $w$} and by interpolation : \begin{align*} \int_0^t\|\nabla_H p\|_{\dot B_{2,1}^{\frac{1}{2}}\cap \dot B_{2,1}^{\frac{3}{2}}}d\tau & \lesssim \int_0^t\|u\|_{\dot B_{2,1}^{\frac{1}{2}}\cap \dot B_{2,1}^{\frac{3}{2}}}\|v\|_{\dot B_{2,1}^{\frac{5}{2}}}d\tau \\ &  \lesssim \int_0^t \left(\|v\|_{\dot B_{2,1}^{\frac{1}{2}}\cap \dot B_{2,1}^{\frac{3}{2}}}\|v\|_{\dot B_{2,1}^{\frac{5}{2}}}+\|v\|_{\dot B_{2,1}^{\frac{5}{2}}}^2\right) d\tau \\ &  \lesssim \int_0^t \mathcal{A}(\tau)\mathcal{B}(\tau) d\tau \\  &  \lesssim \|v_0\|_{\dot B_{2,1}^{\frac{1}{2}}\cap \dot B_{2,1}^{\frac{3}{2}}},\end{align*}
whence \eqref{estimee finale systeme limite besov}.

\subsubsection{Existence theorem}~\\
Let us study the following system : $$\partial_t v+(u\cdot \nabla v)-\Delta v+\frac{1}{2}\int_{-1}^1\nabla_H (-\Delta)^{-1}\dive_H(u\cdot \nabla v)dz=0, $$
where we used \eqref{pression pour équations primitives} for the pressure and we set up $u=(v,w)$ with $w$ defined by the formal expression : $$w\mathrel{\mathop:}=-\int_{-1}^z \dive_H(v)dz',$$ coming from $\dive_H v+\partial_z w=0$ and the imparity condition on $w$.
 
We then define the following truncation operator: \begin{eqnarray} \label{opérateur de troncature }J_n u \mathrel{\mathop :}= \sum_{|k|\leq n}\mathcal{F}_H^{-1}\left((\textbf{1}_{n^{-1}\leq |\xi_H|\leq n})\mathcal{F}_H u(\xi_H)\right)(x,y)\times\widehat{u}_ke^{i\pi kz}\end{eqnarray}
where we denote by $\mathcal{F}_H$ the Fourier transformation on $\Omega_h$. $J_n$ is in particular an orthogonal projector on $L^2$.

The Friedrichs method is then used in a similar way to that presented in \cite{RD}.

We introduce the following approximating system: $$\partial_t v+J_n(J_n u\cdot \nabla J_n v)-\Delta J_n v+\frac{1}{2}\int_{-1}^1 (-\Delta)^{-1}\dive_H J_n(J_n u\cdot \nabla J_n 
v)dz=0,$$ with initial data $J_n v_0$.

\begin{itemize}
    \item[$\bullet$] By the Cauchy-Lipschitz theorem, we have (using the spectral truncation operator) that this system admits a unique maximal solution $v_n\in \mathcal{C}^1([0,T_n[; L^2)$ with initial data (for all $n\in\N$) $J_n v_0$.
\item[$\bullet$] We have $J_n v_n=v_n$ by using the uniqueness in the previous system and so $v_n$ is solution of the system :  $$\partial_t v+J_n(u\cdot \nabla v)-\Delta v+\frac{1}{2}\int_{-1}^1 (-\Delta)^{-1}\dive_H J_n(u\cdot \nabla v)dz=0,$$ with initial data $J_n v_0$.
\item[$\bullet$] By the previous estimates, we then deduce for all $t\in [0,T_n[$ : \begin{align*} \|v_n(t)\|_{\dot B_{2,1}^{\frac{1}{2}}\cap \dot B_{2,1}^{\frac{3}{2}}} +\int_0^t \|v_n(\tau)\|_{\dot B_{2,1}^{\frac{5}{2}}\cap \dot B_{2,1}^{\frac{7}{2}}} d\tau & \lesssim \|J_n(v_0)\|_{\dot B_{2,1}^{\frac{1}{2}}\cap \dot B_{2,1}^{\frac{3}{2}}} \\ & \lesssim \|v_0\|_{\dot B_{2,1}^{\frac{1}{2}}\cap \dot B_{2,1}^{\frac{3}{2}}}.\end{align*}
By extension argument of the maximal solution, we thus have that $T^n=+\infty$. 
\end{itemize}

Especially, we have uniformly in $n\in\N$ that : $$v_n\in \mathcal{C}_b(\R_+;\dot B_{2,1}^{\frac{1}{2}}\cap\dot B_{2,1}^{\frac{3}{2}})\cap L^1(\R_+;\dot B_{2,1}^{\frac{5}{2}}\cap\dot B_{2,1}^{\frac{7}{2}}).$$ In particular, we have for all $n\in\N$, $v_n$ bounded (by interpolation) in $L^2\left(\R_+;\dot B_{2,1}^{\frac{3}{2}}\right)$. We know that $\dot B_{2,1}^{\frac{3}{2}}$ is locally compact in $L^2$.We can therefore apply Ascoli's theorem and, with diagonal extraction, show that even if we extract, the sequence of approximate solutions $(v_n)_{n\in\N}$ converge to $v$ in $L^2([0,T[;L_{loc}^2(\Omega))$. 

By classical arguments of weak compactness, continuity and properties $L^1$ in time, we have that $v$ is in $E$ defined in \eqref{définition espace fonctionnel E}.

We complete the proof of the existence part of the theorem by easily verifying that this limit is indeed a solution of the system \eqref{Equations primitives} and with the information on $p$ obtained in the a priori estimates.
\subsubsection{Uniqueness}~\\
Let $(u_1,p_1)$ and $(u_2,p_2)$ be two solutions with initial data $u_0$ where $(u_1,p_1)$ is the solution found previously, verifying the inequality \eqref{estimee finale systeme limite besov} and the smallness condition \eqref{condition de petitesse système limite}.

We then have that the system satisfied by the difference of the two solutions $\delta v\mathrel{\mathop:}=v_1-v_2$ is : \begin{eqnarray}\label{système différence}\left\{\begin{array}{l} \partial_t \delta v-\Delta \delta v+\nabla_H \delta p=-\delta u \cdot \nabla v_1-u_2 \cdot \nabla \delta v \\ \delta_z \delta p=0 \\ \dive \delta u=0.\end{array}\right.\end{eqnarray}

If we prove $u_1=u_2$, then we will have the uniqueness for $\nabla p$ thanks to expression $\nabla p=\begin{pmatrix} -\partial_t v-u\cdot\nabla v+\Delta v \\ 0 \end{pmatrix}$. 

By applying $\dot\Delta_j$ to the first equation of \eqref{système différence}, we have : 
$$\partial_t \delta v_j-\Delta \delta v_j+\nabla_H \delta p_j=-\dot\Delta_j(\delta u\cdot \nabla v_1)-\dot\Delta_j(u_2\cdot\nabla\delta v).$$

By applying the scalar product with $\delta v_j$ and as \eqref{intégrale pression nulle} to eliminate the pressure term, we then deduce : $$\displaylines{\frac{1}{2}\frac{d}{dt}\|\delta v_j\|_{L^2}^2+\|\nabla \delta v_j\|_{L^2}^2=-\int_{\Omega}\dot\Delta_j(\delta u\cdot \nabla v_1)\cdot \delta v_j dX \hfill\cr\hfill -\int_{\Omega}\dot\Delta_j(u_2\cdot \nabla\delta v)\cdot \delta v_j \ dX.}$$

We have also: $$\dot\Delta_j(u_2\cdot\nabla)\delta v=(u_2\cdot\nabla)\delta v_j+[\dot\Delta_j, u_2\cdot\nabla]\delta v.$$
By integration by parts, since $\dive u_0=0$, we get : \begin{align*} \int_{\Omega}(u_2\cdot\nabla)\delta v_j\cdot\delta v_j \ dX =0.\end{align*}

By the Cauchy-Schwarz inequality, we deduce : \begin{equation}\label{inégalité CS1}\frac{1}{2}\frac{d}{dt}\|\delta v_j\|_{L^2}^2+2^{2j}\|v_j\|_{L^2}^2\lesssim \left(\|\dot\Delta_j(\delta u\cdot\nabla v_1)\|_{L^2}+\|[\dot\Delta_j,u_2\cdot\nabla]\delta v\|_{L^2}\right)\|\delta v_j\|_{L^2}.
\end{equation}
By the commutator estimates, there is a sequence $(c_j)_{j\in\Z}$ verifying $\sum_{j\in\Z}c_j=1$ such that : $$\|[\dot\Delta_j,u_2\cdot\nabla]\delta v\|_{L^2}\leq C c_j 2^{-\frac{j}{2}}\|\nabla u_2\|_{\dot B_{2,1}^{\frac{3}{2}}}\|\delta v_j\|_{L^2}\leq C c_j 2^{-\frac{j}{2}}\|u_2\|_{\dot B_{2,1}^{\frac{5}{2}}}\|\delta v_j\|_{L^2}.$$

By multiplying by $2^{\frac{j}{2}}$ the inequality \eqref{inégalité CS1}, by summing up on $j\in\Z$ and by integrating between $0$ and $t$, we have : 
$$\|\delta v(t)\|_{\dot B_{2,1}^{\frac{1}{2}}}+\int_0^t \|\delta v\|_{\dot B_{2,1}^{\frac{5}{2}}}d\tau \lesssim \int_0^t \|\delta u\cdot \nabla v_1\|_{\dot B_{2,1}^{\frac{1}{2}}} d\tau+\int_0^t \|u_2\|_{\dot B_{2,1}^{\frac{5}{2}}}\|\delta v\|_{\dot B_{2,1}^{\frac{1}{2}}} d\tau.  $$

By \eqref{lien besov $v$ et $w$}, we have $$\|u_2\|_{\dot B_{2,1}^{\frac{5}{2}}}\lesssim \|v_2\|_{\dot B_{2,1}^{\frac{5}{2}}}+\|w_2\|_{\dot B_{2,1}^{\frac{5}{2}}}\lesssim \|v_2\|_{\dot B_{2,1}^{\frac{5}{2}}\cap\dot B_{2,1}^{\frac{7}{2}}}.$$

By product laws \eqref{Produit espace de Besov} and the inequality \eqref{lien besov $v$ et $w$}, we have : \begin{align*}\|\delta u\cdot \nabla v_1\|_{\dot B_{2,1}^{\frac{1}{2}}} & \lesssim \|\delta v\cdot \nabla_H  v_1\|_{\dot B_{2,1}^{\frac{1}{2}}}+\|\delta w \partial_z v_1\|_{\dot B_{2,1}^{\frac{1}{2}}} \\ & \lesssim \|\delta v\|_{\dot B_{2,1}^{\frac{1}{2}}}\|v_1\|_{\dot B_{2,1}^{\frac{5}{2}}}+\|\delta w\|_{\dot B_{2,1}^{\frac{3}{2}}}\|v_1\|_{\dot B_{2,1}^{\frac{3}{2}}} \\ & \lesssim \|\delta v\|_{B_{2,1}^\frac{1}{2}}\|v_1\|_{\dot B_{2,1}^{\frac{5}{2}}}+\|\delta v\|_{\dot B_{2,1}^{\frac{5}{2}}}\|v_1\|_{\dot B_{2,1}^{\frac{3}{2}}}.\end{align*}

By the smallness of $\|v_1\|_{\dot B_{2,1}^{\frac{3}{2}}}$, we then deduce : $$\|\delta v(t)\|_{\dot B_{2,1}^{\frac{1}{2}}}+\int_0^t \|\delta v\|_{\dot B_{2,1}^{\frac{5}{2}}} d\tau\lesssim \int_0^t \|\delta v\|_{\dot B_{2,1}^{\frac{1}{2}}}\left(\|v_1\|_{\dot B_{2,1}^{\frac{5}{2}}}+\|v_2\|_{\dot B_{2,1}^{\frac{5}{2}}\cap \dot B_{2,1}^{\frac{7}{2}}}\right)d\tau.$$

Because $t\mapsto \|v_1(t)\|_{\dot B_{2,1}^{\frac{5}{2}}}+\|v_2(t)\|_{\dot B_{2,1}^{\frac{5}{2}}\cap \dot B_{2,1}^{\frac{7}{2}}}$ is in $L^1(\R^+)$, we then have by Grönwall's lemma : $$\|\delta v(t)\|_{\dot B_{2,1}^{\frac{1}{2}}}=0 \quad \forall t\in\R^+.$$

\subsection{Anisotropic Navier Stokes equations}~\\
The system \eqref{NS remise à l'échelle} can be rewritten like : 
\begin{eqnarray}\label{systeme NSA 2}\left\{\begin{array}{l}
     \partial_t \begin{pmatrix}
         v_{\varepsilon} \\ \varepsilon w_{\varepsilon}
     \end{pmatrix} +\nabla_\varepsilon p_{\varepsilon}-\Delta\begin{pmatrix}
          v_{\varepsilon} \\ \varepsilon w_{\varepsilon}
     \end{pmatrix}= \begin{pmatrix}
         -u_\varepsilon\cdot \nabla v_\varepsilon \\ -u_\varepsilon\cdot \nabla (\varepsilon w_\varepsilon)
     \end{pmatrix} \\ 
     \dive_\varepsilon (v_\varepsilon,\varepsilon w_\varepsilon)=0
     
\end{array} \right.\end{eqnarray} 

where $\dive_\varepsilon$ is defined by : \begin{eqnarray}
\dive_\varepsilon U\mathrel{\mathop:}=\dive_H(U_1,U_2)+\varepsilon^{-1}\partial_z U_3
\end{eqnarray} and $\displaystyle \nabla_\varepsilon$ by \begin{eqnarray}
\nabla_\varepsilon\mathrel{\mathop:}=\begin{pmatrix}
    \nabla_H \\ \varepsilon^{-1} \partial_z \end{pmatrix}. \end{eqnarray}

In the rest of this section we will prove the result of well-posedness and uniqueness of \eqref{NS remise à l'échelle} presented in Theorem \ref{BIG theorem}.

Let us start by proving a priori estimates for this system : 
\subsubsection{A priori estimates}
By applying $\dot \Delta_j$ to \eqref{NS remise à l'échelle}, we obtain : \begin{eqnarray}\label{systeme NSA localisé}\left\{\begin{array}{l}
     \partial_t \begin{pmatrix}
         v_{\varepsilon,j} \\ \varepsilon w_{\varepsilon,j}
     \end{pmatrix} +\nabla_\varepsilon p_{\varepsilon,j}-\Delta\begin{pmatrix}
          v_{\varepsilon,j} \\ \varepsilon w_{\varepsilon,j}
     \end{pmatrix}= \dot \Delta_j \begin{pmatrix}
         u_\varepsilon\cdot \nabla v_\varepsilon \\ u_\varepsilon\cdot \nabla (\varepsilon w_\varepsilon)
     \end{pmatrix} \\ 
     \dive u_{\varepsilon,j}=0.
     
\end{array} \right.\end{eqnarray}

Let us start by looking at the pressure term : 

By applying $\dive_\varepsilon$ to the system \eqref{systeme NSA localisé}, we obtain : $$\partial_t \dive u_{\varepsilon,j}+\Delta_\varepsilon p_{\varepsilon,j}-\Delta \dive u_{\varepsilon,j}=\dive_\varepsilon \dot \Delta_j\begin{pmatrix}
         u_\varepsilon\cdot \nabla v_\varepsilon \\ u_\varepsilon\cdot \nabla (\varepsilon w_\varepsilon)
     \end{pmatrix}, $$ where $\Delta_\varepsilon\mathrel{\mathop:}=\dive_\varepsilon \nabla_\varepsilon$.

As $\displaystyle \dive u_{\varepsilon,j}=0$, we deduce : $$\Delta_\varepsilon p_{\varepsilon,j}=\dive_\varepsilon \dot \Delta_j \begin{pmatrix}
         u_\varepsilon\cdot \nabla v_\varepsilon \\ u_\varepsilon\cdot \nabla (\varepsilon w_\varepsilon)
     \end{pmatrix}.$$
So we have: \begin{eqnarray}\label{expression de p epsilon} \nabla_\varepsilon p_{\varepsilon,j}=-\nabla_\varepsilon (-\Delta_\varepsilon)^{-1} \dive_\varepsilon \dot \Delta_j\begin{pmatrix}
         u_\varepsilon\cdot \nabla v_\varepsilon \\ u_\varepsilon\cdot \nabla (\varepsilon w_\varepsilon)
     \end{pmatrix}.\end{eqnarray}
\begin{lemma}\label{continuité opérateur de Leray}~\\
    The operator $-\nabla_\varepsilon(-\Delta_\varepsilon)^{-1}\dive_\varepsilon$ is an orthogonal projector on $L^2$.
\end{lemma}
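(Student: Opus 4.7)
The plan is to verify the two defining properties of an orthogonal projector on $L^2$: idempotence $P_\varepsilon^2 = P_\varepsilon$ and self-adjointness $P_\varepsilon^* = P_\varepsilon$, where I abbreviate $P_\varepsilon := -\nabla_\varepsilon(-\Delta_\varepsilon)^{-1}\dive_\varepsilon$.

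I would first establish the adjoint identity $\dive_\varepsilon^* = -\nabla_\varepsilon$ (equivalently $\nabla_\varepsilon^* = -\dive_\varepsilon$) by componentwise integration by parts. The horizontal boundary contributions vanish either by periodicity in $\Omega_1$ or by $L^2$-decay in $\Omega_2$, while the vertical boundary terms vanish thanks to the periodicity in $z$ combined with the imposed parity (the scalar pressure being even and the third component $w_\varepsilon$ being odd, so that products appearing in the boundary values cancel at $z = \pm 1$). It follows that $\Delta_\varepsilon = \dive_\varepsilon\nabla_\varepsilon = -\nabla_\varepsilon^*\nabla_\varepsilon$ is self-adjoint and non-positive, so $(-\Delta_\varepsilon)^{-1}$ is self-adjoint on the orthogonal complement of the constants, where it is naturally defined.

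From these identities, idempotence is direct: using $\dive_\varepsilon\nabla_\varepsilon = \Delta_\varepsilon$,
\begin{equation*}
P_\varepsilon^2 = \nabla_\varepsilon(-\Delta_\varepsilon)^{-1}\dive_\varepsilon\nabla_\varepsilon(-\Delta_\varepsilon)^{-1}\dive_\varepsilon = \nabla_\varepsilon(-\Delta_\varepsilon)^{-1}\Delta_\varepsilon(-\Delta_\varepsilon)^{-1}\dive_\varepsilon = -\nabla_\varepsilon(-\Delta_\varepsilon)^{-1}\dive_\varepsilon = P_\varepsilon,
\end{equation*}
and self-adjointness follows by taking adjoints and invoking the relations of the previous step,
\begin{equation*}
P_\varepsilon^* = -(\dive_\varepsilon)^{*}\bigl((-\Delta_\varepsilon)^{-1}\bigr)^{*}(\nabla_\varepsilon)^{*} = -(-\nabla_\varepsilon)(-\Delta_\varepsilon)^{-1}(-\dive_\varepsilon) = P_\varepsilon.
\end{equation*}

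A more transparent confirmation is obtained in Fourier variables: setting $\xi_\varepsilon := (\xi_H, \varepsilon^{-1}\xi_z)$ for a frequency $\xi = (\xi_H, \xi_z)$, the symbol of $P_\varepsilon$ is the matrix $\xi_\varepsilon \otimes \xi_\varepsilon / |\xi_\varepsilon|^2$, which is manifestly the orthogonal projector of $\R^3$ onto the line $\R\xi_\varepsilon$. The same picture persists on $\Omega_2$ after a partial Fourier transform in the horizontal variables combined with Fourier series in $z$. The only mild obstacle, and the step deserving the most care, is in defining $(-\Delta_\varepsilon)^{-1}$: it is enough to note that $|\xi_\varepsilon|^2 = |\xi_H|^2 + \varepsilon^{-2}|\xi_z|^2$ is bounded below on the set $\xi \neq 0$ in each geometry, so that the Fourier multiplier is well defined, while the fields to which $P_\varepsilon$ will be applied in \eqref{expression de p epsilon} lie in the zero-mean subspace by the divergence structure of the right-hand side, so no zero-frequency issue arises.
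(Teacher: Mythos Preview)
Your argument is correct and in fact more complete than the paper's. The paper simply writes out the Fourier symbol
\[
\mathcal{F}\bigl(\nabla_\varepsilon(-\Delta_\varepsilon)^{-1}\dive_\varepsilon u\bigr)(\xi)=\frac{1}{|\xi_H|^2+\varepsilon^{-2}\xi_z^2}\begin{pmatrix} i\xi_H(i\xi_H\cdot\widehat v+\varepsilon^{-1}i\xi_z\widehat w)\\ \varepsilon^{-1}i\xi_z(i\xi_H\cdot\widehat v+\varepsilon^{-1}i\xi_z\widehat w)\end{pmatrix}
\]
and then uses Cauchy--Schwarz in the variable $(\xi_H,\varepsilon^{-1}\xi_z)$ to bound each component by $|\widehat u|$, i.e.\ it only checks the $L^2$ bound with norm at most $1$ (which is what is actually used later). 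Your second paragraph recovers exactly this Fourier picture, and you go further by recognising the symbol as the rank-one matrix $\xi_\varepsilon\otimes\xi_\varepsilon/|\xi_\varepsilon|^2$, which makes the projector property transparent. Your first paragraph, the abstract verification of $P_\varepsilon^2=P_\varepsilon$ and $P_\varepsilon^*=P_\varepsilon$ from $\dive_\varepsilon^*=-\nabla_\varepsilon$, is a genuinely different and cleaner route that the paper does not take; it has the advantage of proving the full statement of the lemma rather than just the norm bound.

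One small remark: in justifying the integration by parts you invoke both periodicity in $z$ and the parity conditions on $v$, $w$, $p$. The parity is superfluous here: on $\Omega_1$ and $\Omega_2$ the vertical direction is periodic, so $\int_{-1}^1\partial_z(\cdot)\,dz=0$ for periodic integrands, and that alone kills the boundary terms. The parity hypotheses belong to the PDE solutions, not to the operator $P_\varepsilon$ acting on general $L^2$ fields, so it is better not to appeal to them in this lemma.
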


\begin{proof}
~\\
    Let $u\in L^2$, we have : $$\mathcal{F}\left(\nabla_\varepsilon(-\Delta_\varepsilon)^{-1}\dive_\varepsilon u\right)=\frac{1}{|\xi_H|^2+\varepsilon^{-2}\xi_z^2}\begin{pmatrix}
    i\xi_H (i\xi_H\cdot \widehat{v} +\varepsilon^{-1}i\xi_z \widehat{w}) \\ \varepsilon^{-1}i\xi_z (i\xi_H\cdot \widehat{v} +\varepsilon^{-1}i\xi_z \widehat{w})
\end{pmatrix}\cdotp$$

By using Cauchy-Schwarz inequality with the variable $(\xi_H,\varepsilon^{-1}\xi_z)$, we obtain: \begin{align*}\frac{1}{|\xi_H|^2+\varepsilon^{-2}\xi_z^2}\left| i\xi_H (i\xi_H\cdot \widehat{v} +\varepsilon^{-1}i\xi_z \widehat{w})\right| \leq |\widehat{u}|
\end{align*}

and in the same way $$\frac{1}{|\xi_H|^2+\varepsilon^{-2}\xi_z^2}\left| i\varepsilon^{-1}\xi_z (i\xi_H\cdot \widehat{v} +\varepsilon^{-1}i\xi_z \widehat{w})\right| \leq |\widehat u|.$$
\end{proof}

By mutliplying by $2^{js}$ with $s\in\R$ and summing up on $j\in\Z$, we obtain : $$\|\nabla_\varepsilon p_{\varepsilon}\|_{\dot B_{2,1}^s}\leq  \|u_\varepsilon\cdot \nabla v_\varepsilon\|_{\dot B_{2,1}^s} + \|u_\varepsilon\cdot \nabla (\varepsilon w_\varepsilon)\|_{\dot B_{2,1}^s}.$$

Now let us take a look at the estimates for $v_\varepsilon$.

By taking the scalar product with $v_{\varepsilon,j}$ in the first equation of \eqref{systeme NSA localisé}, by Cauchy-Schwarz inequality, by Lemma \ref{lemme edo}, by multiplying by $2^{js}$ (with $s\in\R$) and summing up on $j\in \Z$, we obtain : $$\displaylines{\|v_\varepsilon(t)\|_{\dot B_{2,1}^s}+\int_0^t \left(\|v_\varepsilon\|_{\dot B_{2,1}^{s+2}}+\|\nabla_\varepsilon p_\varepsilon\|_{\dot B_{2,1}^s}\right)d\tau \lesssim \|\overline{v}_0\|_{\dot B_{2,1}^s} \hfill\cr\hfill +\int_0^t \|u_\varepsilon\cdot \nabla (v_\varepsilon, \varepsilon w_\varepsilon)\|_{\dot B_{2,1}^{s}}d\tau.}$$ 

By the previous esimate with $s\in\{\frac{1}{2},\frac{3}{2}\}$, we then have : 
$$\displaylines{\|v_\varepsilon(t)\|_{\dot B_{2,1}^{\frac{1}{2}}\cap \dot B_{2,1}^{\frac{3}{2}}}+\int_0^t \left(\|v_\varepsilon\|_{\dot B_{2,1}^{\frac{5}{2}}\cap \dot B_{2,1}^{\frac{7}{2}}}+\|\nabla_\varepsilon p_\varepsilon\|_{\dot B_{2,1}^{\frac{1}{2}}\cap \dot B_{2,1}^{\frac{3}{2}}}\right)d\tau  \hfill\cr\hfill \lesssim \|\overline{v}_0\|_{\dot B_{2,1}^{\frac{1}{2}}\cap \dot B_{2,1}^{\frac{3}{2}}}+\int_0^t \|u_\varepsilon\cdot \nabla (v_\varepsilon, \varepsilon w_\varepsilon)\|_{\dot B_{2,1}^{\frac{1}{2}}\cap \dot B_{2,1}^{\frac{3}{2}}}d\tau.}$$ 

Let us now consider all the non-linear terms on the right-hand side.
\begin{lemma}\label{lemme terme non linéaire 1/2}
    We have : $$\left\{\begin{array}{l} \displaystyle\|u_\varepsilon\cdot \nabla v_\varepsilon\|_{\dot B_{2,1}^{\frac{1}{2}}}\lesssim \|v_\varepsilon\|_{\dot B_{2,1}^{\frac{1}{2}}\cap \dot B_{2,1}^{\frac{3}{2}}}\|v_\varepsilon\|_{\dot B_{2,1}^{\frac{5}{2}}}, \\ \displaystyle
    \|u_\varepsilon\cdot \nabla v_\varepsilon\|_{\dot B_{2,1}^{\frac{3}{2}}}\lesssim \|v_\varepsilon\|_{\dot B_{2,1}^{\frac{3}{2}}}\|v_\varepsilon\|_{\dot B_{2,1}^{\frac{5}{2}}\cap \dot B_{2,1}^{\frac{7}{2}}}, 
    \\ \displaystyle \|u_\varepsilon\cdot \nabla (\varepsilon w_\varepsilon)\|_{\dot B_{2,1}^{\frac{1}{2}}\cap \dot B_{2,1}^{\frac{3}{2}}}\lesssim \varepsilon\|v_\varepsilon\|_{\dot B_{2,1}^{\frac{1}{2}}\cap \dot B_{2,1}^{\frac{3}{2}}}\|v_\varepsilon\|_{\dot B_{2,1}^{\frac{5}{2}}\cap \dot B_{2,1}^{\frac{7}{2}}}.
    \end{array}\right.$$

\end{lemma}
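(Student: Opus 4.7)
The plan is to decompose $u_\varepsilon\cdot\nabla=v_\varepsilon\cdot\nabla_H+w_\varepsilon\partial_z$ in each of the three products, apply the Besov product rule of Lemma \ref{Produit espace de Besov} to every piece, and convert every norm of $w_\varepsilon$ into a norm of $v_\varepsilon$ via \eqref{lien besov $v$ et $w$}. That estimate is still available because $\dive u_\varepsilon=0$ and $w_\varepsilon$ is odd in $z$, exactly as for the primitive equations; crucially, the divergence relation of \eqref{NS remise à l'échelle} is the plain one, so no $\varepsilon$ factor is introduced when we trade $w_\varepsilon$ for $v_\varepsilon$.

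For the first two inequalities ($f=v_\varepsilon$) the computation mirrors the a priori calculation already carried out for \eqref{Equations primitives}: at $s=\tfrac12$ the horizontal piece yields $\|v_\varepsilon\|_{\dot B_{2,1}^{1/2}}\|v_\varepsilon\|_{\dot B_{2,1}^{5/2}}$, while the vertical piece, after one application of \eqref{lien besov $v$ et $w$}, becomes $\|v_\varepsilon\|_{\dot B_{2,1}^{3/2}}\|v_\varepsilon\|_{\dot B_{2,1}^{5/2}}$; at $s=\tfrac32$ the vertical piece produces $\|v_\varepsilon\|_{\dot B_{2,1}^{5/2}}^2$, which I dispatch by the same interpolation $\|v_\varepsilon\|_{\dot B_{2,1}^{5/2}}^2\lesssim\|v_\varepsilon\|_{\dot B_{2,1}^{3/2}}\|v_\varepsilon\|_{\dot B_{2,1}^{7/2}}$ that was used previously. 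Summing the horizontal and vertical contributions yields the first two lines.

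For the third inequality the decomposition reads $u_\varepsilon\cdot\nabla(\varepsilon w_\varepsilon)=\varepsilon\,v_\varepsilon\cdot\nabla_H w_\varepsilon+\varepsilon\,w_\varepsilon\partial_z w_\varepsilon$, so the factor $\varepsilon$ pulls straight out of the Besov norm. Applying the product law at $s=\tfrac12$ and $s=\tfrac32$ and then \eqref{lien besov $v$ et $w$} to each occurrence of $w_\varepsilon$, every resulting term is controlled by $\varepsilon\,\|v_\varepsilon\|_{\dot B_{2,1}^{1/2}\cap \dot B_{2,1}^{3/2}}\|v_\varepsilon\|_{\dot B_{2,1}^{5/2}\cap \dot B_{2,1}^{7/2}}$, with one more interpolation of $\|v_\varepsilon\|_{\dot B_{2,1}^{5/2}}^2$ needed to bound $\varepsilon\,w_\varepsilon\partial_z w_\varepsilon$ in $\dot B_{2,1}^{3/2}$.

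The proof is essentially bookkeeping and I see no genuine obstacle. The single point that deserves a little care is checking that the prefactor $\varepsilon$ in the third line really travels linearly through the calculation: this is precisely why we introduced the variable $\varepsilon w_\varepsilon$ in the reformulation \eqref{systeme NSA 2}, and why \eqref{lien besov $v$ et $w$} must be applied to $w_\varepsilon$ itself (which carries no $\varepsilon$) rather than to $\varepsilon w_\varepsilon$, so that no stray $\varepsilon^{-1}$ pollutes the horizontal half of the decomposition.
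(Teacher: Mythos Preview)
Your proposal is correct and follows essentially the same route as the paper: decompose $u_\varepsilon\cdot\nabla$ into horizontal and vertical pieces, apply the product laws of Lemma~\ref{Produit espace de Besov}, trade $w_\varepsilon$ for $v_\varepsilon$ via the Poincar\'e-type inequality \eqref{lien besov v_epsilon et w_epsilon} (the $\varepsilon$-analogue of \eqref{lien besov $v$ et $w$}), and interpolate $\|v_\varepsilon\|_{\dot B_{2,1}^{5/2}}^2\lesssim\|v_\varepsilon\|_{\dot B_{2,1}^{3/2}}\|v_\varepsilon\|_{\dot B_{2,1}^{7/2}}$ whenever the vertical piece produces a square. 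The only cosmetic difference is that the paper sometimes applies the product law directly to the factor $u_\varepsilon$ (e.g.\ $\|u_\varepsilon\cdot\nabla v_\varepsilon\|_{\dot B_{2,1}^{1/2}}\lesssim\|u_\varepsilon\|_{\dot B_{2,1}^{1/2}}\|\nabla v_\varepsilon\|_{\dot B_{2,1}^{3/2}}$) before splitting, whereas you split first; the resulting bounds are identical.
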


\begin{proof}
We have in a first time (like for \eqref{lien besov $v$ et $w$}) : \begin{eqnarray}\label{lien besov v_epsilon et w_epsilon}
    \|w_\varepsilon\|_{\dot B_{2,1}^{s}}\leq \|\partial_z w_\varepsilon\|_{\dot B_{2,1}^{s}}= \|\dive_H v_\varepsilon\|_{\dot B_{2,1}^{s}}\lesssim \| v_\varepsilon\|_{\dot B_{2,1}^{s+1}}.\end{eqnarray}

By product laws and \eqref{lien besov v_epsilon et w_epsilon}, we have : 

$$\|u_\varepsilon\cdot \nabla v_\varepsilon\|_{\dot B_{2,1}^{\frac{1}{2}}}\lesssim \|u_\varepsilon\|_{\dot B_{2,1}^{\frac{1}{2}}}\|\nabla v_\varepsilon\|_{\dot B_{2,1}^{\frac{3}{2}}}\lesssim \|v_\varepsilon\|_{\dot B_{2,1}^{\frac{1}{2}}\cap \dot B_{2,1}^{\frac{3}{2}}}\|v_\varepsilon\|_{\dot B_{2,1}^{\frac{5}{2}}}.$$

We have also : 
$$\|u_\varepsilon \cdot \nabla v_\varepsilon\|_{\dot B_{2,1}^{\frac{3}{2}}}\lesssim \|u_\varepsilon\|_{\dot B_{2,1}^\frac{3}{2}}\| \nabla v_\varepsilon\|_{\dot B_{2,1}^{\frac{3}{2}}}\lesssim \|v_\varepsilon\|_{\dot B_{2,1}^{\frac{3}{2}}}\|v_\varepsilon\|_{\dot B_{2,1}^{\frac{5}{2}}}+\|v_\varepsilon\|_{\dot B_{2,1}^{\frac{5}{2}}}^2.$$

By interpolation, we have : $$\|v_\varepsilon\|_{\dot B_{2,1}^{\frac{5}{2}}}^2\lesssim \|v_\varepsilon\|_{\dot B_{2,1}^{\frac{3}{2}}}\|v_\varepsilon\|_{\dot B_{2,1}^{\frac{7}{2}}}.$$

We have also : $$\displaylines{\|u_\varepsilon\cdot \nabla(\varepsilon w_\varepsilon)\|_{\dot B_{2,1}^{\frac{1}{2}}} \lesssim \|u_\varepsilon\|_{\dot B_{2,1}^{\frac{1}{2}}}\|\nabla\varepsilon w_\varepsilon\|_{\dot B_{2,1}^{\frac{3}{2}}}\lesssim \varepsilon\|v_\varepsilon\|_{\dot B_{2,1}^{\frac{1}{2}}\cap \dot B_{2,1}^{\frac{3}{2}}}\|w_\varepsilon\|_{\dot B_{2,1}^{\frac{5}{2}}} \hfill\cr\hfill \lesssim \varepsilon \|v_\varepsilon\|_{\dot B_{2,1}^{\frac{1}{2}}\cap \dot B_{2,1}^{\frac{3}{2}}}\|v_\varepsilon\|_{\dot B_{2,1}^{\frac{5}{2}}\cap \dot B_{2,1}^{\frac{7}{2}}}.}$$

 Noting that $u_\varepsilon\cdot \nabla(\varepsilon w_\varepsilon)=v_\varepsilon\cdot \nabla_H (\varepsilon w_\varepsilon)+w_\varepsilon \partial_z (\varepsilon w_\varepsilon)$, 
    we have by triangular inequality : $$\|u_\varepsilon\cdot \nabla(\varepsilon w_\varepsilon)\|_{\dot B_{2,1}^{\frac{3}{2}}}\leq \|v_\varepsilon\cdot \nabla_H (\varepsilon w_\varepsilon)\|_{\dot B_{2,1}^{\frac{3}{2}}}+\|w_\varepsilon\partial_z (\varepsilon w_\varepsilon)\|_{\dot B_{2,1}^{\frac{3}{2}}}.$$

    We have by product laws and \eqref{lien besov v_epsilon et w_epsilon} : $$\|v_\varepsilon\cdot\nabla_H (\varepsilon w_\varepsilon)\|_{\dot B_{2,1}^{\frac{3}{2}}}\lesssim \varepsilon\|v_\varepsilon\|_{\dot B_{2,1}^{\frac{3}{2}}}\|\nabla_H w_\varepsilon\|_{\dot B_{2,1}^{\frac{3}{2}}}\lesssim\varepsilon \|v_\varepsilon\|_{\dot B_{2,1}^{\frac{3}{2}}} \|v_\varepsilon\|_{\dot B_{2,1}^{\frac{7}{2}}}.$$

    We obtain also : $$\|w_\varepsilon\partial_z (\varepsilon w_\varepsilon)\|_{\dot B_{2,1}^{\frac{3}{2}}}\lesssim \varepsilon\|w_\varepsilon\|_{\dot B_{2,1}^{\frac{3}{2}}}\|\partial_z w_\varepsilon\|_{\dot B_{2,1}^{\frac{3}{2}}}\lesssim\varepsilon \|v_{\varepsilon}\|_{\dot B_{2,1}^{\frac{5}{2}}}^2\lesssim \varepsilon\|v_\varepsilon\|_{\dot B_{2,1}^{\frac{3}{2}}}\|v_\varepsilon\|_{\dot B_{2,1}^{\frac{7}{2}}}.$$

This leads to the lemma.
\end{proof}

We obtain : 
$$\displaylines{\|v_\varepsilon(t)\|_{\dot B_{2,1}^{\frac{1}{2}}\cap \dot B_{2,1}^{\frac{3}{2}}}+\int_0^t (\|v_\varepsilon\|_{\dot B_{2,1}^{\frac{5}{2}}\cap \dot B_{2,1}^{\frac{7}{2}}}+\|\nabla_\varepsilon p_\varepsilon\|_{\dot B_{2,1}^{\frac{1}{2}}\cap \dot B_{2,1}^{\frac{3}{2}}})d\tau \hfill\cr\hfill \lesssim \|\overline{v}_0\|_{\dot B_{2,1}^{\frac{1}{2}}\cap \dot B_{2,1}^{\frac{3}{2}}}+\int_0^t \|v_\varepsilon\|_{\dot B_{2,1}^{\frac{1}{2}}\cap \dot B_{2,1}^{\frac{3}{2}}}\|v_\varepsilon\|_{\dot B_{2,1}^{\frac{5}{2}}\cap \dot B_{2,1}^{\frac{7}{2}}} d\tau.}$$

By Lemma \ref{lemme edo2}, we get for all $t\in[0,T]$ :
$$\displaylines{\|v_\varepsilon(t)\|_{\dot B_{2,1}^{\frac{1}{2}}\cap \dot B_{2,1}^{\frac{3}{2}}}+\int_0^t (\|v_\varepsilon\|_{\dot B_{2,1}^{\frac{5}{2}}\cap \dot B_{2,1}^{\frac{7}{2}}}+\|\nabla_\varepsilon p_\varepsilon\|_{\dot B_{2,1}^{\frac{1}{2}}\cap \dot B_{2,1}^{\frac{3}{2}}})d\tau \lesssim \|\overline{v}_0\|_{\dot B_{2,1}^{\frac{1}{2}}\cap \dot B_{2,1}^{\frac{3}{2}}}.}$$

Hence the final a priori estimate of the theorem. 
\subsubsection{Existence theorem}~\\
To remove the pressure term, we do as in the classical case (without anisotropy) where we use the Leray projector. Here, the latter is slightly modified by the anisotropy, but the continuity properties remain the same. Let's consider the anisotropic Leray projector: $$\mathbb{P}_\varepsilon\mathrel{\mathop:}=Id+\nabla_\varepsilon (-\Delta_\varepsilon)^{-1}\dive_{\varepsilon},$$ this expression coming from \eqref{expression de p epsilon}.

In particular, it is a continuous operator with norm 1 from $\dot B_{2,1}^{s}$ to $\dot B_{2,1}^{s}$ for all $s\in\R$ by Lemma \ref{continuité opérateur de Leray} wich satisfies $\mathbb{P}_\varepsilon (v,\varepsilon w)=(v,\varepsilon w)$ for $u=(v,w)$ with $v\in \dot B_{2,1}^{\frac{1}{2}}\cap\dot B_{2,1}^{\frac{3}{2}}$ verifying $\dive_\varepsilon u=0$. 
Finding solutions $\left((v_\varepsilon,\varepsilon w_\varepsilon),p_\varepsilon\right)$ in the system \eqref{NS remise à l'échelle} with initial data $\overline{u}_0$ is equivalent to finding solutions $(v_\varepsilon,\varepsilon w_\varepsilon)$ to the following system with initial condition $\mathbb{P}_\varepsilon \overline{u}_0$ : \begin{eqnarray}\label{NS projeté} \partial_t \begin{pmatrix}
    v_\varepsilon \\ \varepsilon w_\varepsilon
\end{pmatrix}-\Delta \begin{pmatrix}
    v_\varepsilon \\ \varepsilon w_\varepsilon
\end{pmatrix}=-\mathbb{P}_\varepsilon \begin{pmatrix}
    u_\varepsilon\cdot \nabla v_\varepsilon \\ u_\varepsilon\cdot \nabla(\varepsilon w_\varepsilon)
\end{pmatrix}.\end{eqnarray} 
To obtain the existence theorem after obtaining the a priori estimates, we argue using Friedrichs' method like previously.

\subsubsection{Uniqueness}~\\
Let $(u_{\varepsilon,1},p_{\varepsilon,1})$ and $(u_{\varepsilon,2},p_{\varepsilon,2})$ be two solutions of \eqref{NS remise à l'échelle} with initial data $\overline{u}_0$.

The system satisfied by the difference between the two solutions $\delta u\mathrel{\mathop:}=u_{\varepsilon,1}-u_{\varepsilon,2}$, $\delta p_\varepsilon\mathrel{\mathop:}=p_{\varepsilon,1}-p_{\varepsilon,2}$ is : 
$$\left\{\begin{array}{l}
     \partial_t \delta v_\varepsilon-\Delta \delta v_\varepsilon+\nabla_H \delta p_\varepsilon=-\delta u_\varepsilon \cdot \nabla v_{\varepsilon,1}-u_{\varepsilon,2}\cdot \nabla \delta v_\varepsilon \\ \displaystyle
      \partial_t (\varepsilon \delta w_\varepsilon)-\Delta \varepsilon \delta w_\varepsilon+\frac{\partial_z \delta p_\varepsilon}{\varepsilon}=-\varepsilon \delta u_\varepsilon\cdot \nabla w_{\varepsilon,1}-u_{\varepsilon,2}\cdot \nabla(\varepsilon \delta w_\varepsilon)
      \\ \dive \delta u_\varepsilon=0.
\end{array}\right.$$

By applying $\mathbb{P}_\varepsilon$, we get : 
$$\frac{d}{dt} \begin{pmatrix}\delta v_\varepsilon \\ \varepsilon \delta w_\varepsilon \end{pmatrix}-\Delta \begin{pmatrix} \delta v_\varepsilon \\ \varepsilon \delta w_\varepsilon   \end{pmatrix}=-\mathbb{P}_\varepsilon\begin{pmatrix} \delta u_\varepsilon \cdot \nabla v_{\varepsilon,1}+u_{\varepsilon,2}\cdot \nabla \delta v_\varepsilon \\ \varepsilon \delta u_\varepsilon\cdot \nabla w_{\varepsilon,1}+u_{\varepsilon,2}\cdot \nabla(\varepsilon \delta w_\varepsilon)\end{pmatrix}
.$$

By applying $\dot\Delta_j$, we can rewrite the system as follows :
$$\displaylines{\frac{d}{dt} \begin{pmatrix}\delta v_{\varepsilon,j} \\ \varepsilon \delta w_{\varepsilon,j} \end{pmatrix}-\Delta \begin{pmatrix} \delta v_{\varepsilon,j} \\ \varepsilon \delta w_{\varepsilon,j}   \end{pmatrix} \hfill\cr\hfill =-\mathbb{P}_\varepsilon\begin{pmatrix} \dot\Delta_j(\delta u_\varepsilon \cdot \nabla v_{\varepsilon,1})+u_{\varepsilon,2}\cdot \nabla \delta v_{\varepsilon,j}+[\dot\Delta_j, u_{\varepsilon,2}\cdot\nabla]\delta v_\varepsilon \\ \varepsilon \dot\Delta_j(\delta u_\varepsilon\cdot \nabla w_{\varepsilon,1})+u_{\varepsilon,2}\cdot \nabla(\varepsilon \delta w_{\varepsilon,j})+[\dot\Delta_j,u_{\varepsilon,2}\cdot\nabla](\varepsilon\delta w_{\varepsilon})\end{pmatrix}
.}$$

Taking the scalar product with $(\delta v_{\varepsilon,j},\varepsilon \delta w_{\varepsilon,j})$ and by Cauchy-Schwarz inequality, we obtain :

$$\displaylines{\frac{1}{2}\frac{d}{dt}\|(\delta v_{\varepsilon,j},\varepsilon \delta w_{\varepsilon,j})\|_{L^2}^2+2^{2j}\|(\delta v_{\varepsilon,j},\varepsilon \delta w_{\varepsilon,j})\|_{L^2}^2 \hfill\cr \lesssim \bigg(\|\dot\Delta_j (\delta u_\varepsilon\cdot \nabla v_{\varepsilon,1})\|_{L^2}+\|[\dot\Delta_j, u_{\varepsilon,2}\cdot \nabla]\delta v_\varepsilon\|_{L^2}+\varepsilon\|\dot \Delta_j (\delta u_\varepsilon\cdot \nabla w_{\varepsilon,1})\|_{L^2} \hfill\cr\hfill+\|[\dot\Delta_j,u_{\varepsilon,2}\cdot\nabla](\varepsilon \delta w_\varepsilon)\|_{L^2}\bigg)\|(\delta v_{\varepsilon,j},\varepsilon w_{\varepsilon,j})\|_{L^2}}$$

However, we have by the product laws of Lemma \ref{Produit espace de Besov} and by \eqref{lien besov v_epsilon et w_epsilon} : 
\begin{align*} \|\delta u_{\varepsilon}\cdot \nabla v_{\varepsilon,1}\|_{\dot B_{2,1}^{\frac{1}{2}}} & \lesssim \|\delta v_{\varepsilon}\cdot \nabla_H v_{\varepsilon,1}\|_{\dot B_{2,1}^{\frac{1}{2}}}+\|\delta w_{\varepsilon} \partial_z v_{\varepsilon,1}\|_{\dot B_{2,1}^{\frac{1}{2}}} \\ & \lesssim \|\delta v_{\varepsilon} \|_{\dot B_{2,1}^{\frac{1}{2}}}\|v_{\varepsilon,1}\|_{\dot B_{2,1}^{\frac{5}{2}}}+\|\delta w_\varepsilon\|_{\dot B_{2,1}^{\frac{3}{2}}}\|v_{\varepsilon,1}\|_{\dot B_{2,1}^{\frac{3}{2}}}
 \\ & \lesssim \|\delta v_{\varepsilon} \|_{\dot B_{2,1}^{\frac{1}{2}}}\|v_{\varepsilon,1}\|_{\dot B_{2,1}^{\frac{5}{2}}}+\|\delta v_\varepsilon\|_{\dot B_{2,1}^{\frac{5}{2}}}\|v_{\varepsilon,1}\|_{\dot B_{2,1}^{\frac{3}{2}}}
\end{align*}
and
\begin{align*}
    \varepsilon\|\delta u_{\varepsilon}\cdot \nabla w_{\varepsilon,1}\|_{\dot B_{2,1}^{\frac{1}{2}}} & \lesssim \varepsilon\|\delta v_{\varepsilon}\cdot \nabla_H w_{\varepsilon,1}\|_{\dot B_{2,1}^{\frac{1}{2}}}+\varepsilon\|\delta w_{\varepsilon}\partial_z w_{\varepsilon,1}\|_{\dot B_{2,1}^{\frac{1}{2}}} \\ & \lesssim \varepsilon\|\delta v_{\varepsilon} \|_{\dot B_{2,1}^{\frac{1}{2}}}\|w_{\varepsilon,1}\|_{\dot B_{2,1}^{\frac{5}{2}}}+\varepsilon\|\delta w_\varepsilon\|_{\dot B_{2,1}^{\frac{3}{2}}}\|\partial_z w_{\varepsilon,1}\|_{\dot B_{2,1}^{\frac{1}{2}}}
 \\ & \lesssim \varepsilon\|\delta v_{\varepsilon} \|_{\dot B_{2,1}^{\frac{1}{2}}}\|v_{\varepsilon,1}\|_{\dot B_{2,1}^{\frac{7}{2}}}+\varepsilon\|\delta v_\varepsilon\|_{\dot B_{2,1}^{\frac{5}{2}}}\|v_{\varepsilon,1}\|_{\dot B_{2,1}^{\frac{3}{2}}}.
\end{align*}
By commutator estimates, there exists a sequence $(c_j)_{j\in\Z}$ such that
$$\sum_{j\in\Z}c_j=1$$ and which verifies : 
\begin{align*} \|[\dot\Delta_j,u_{\varepsilon,2}\cdot\nabla](\delta v_\varepsilon, \varepsilon\delta w_\varepsilon)\|_{L^2}  \leq C c_j 2^{-\frac{j}{2}}\|u_{\varepsilon,2}\|_{\dot B_{2,1}^{\frac{5}{2}}}\|(\delta v_\varepsilon,\varepsilon \delta w_\varepsilon)\|_{\dot B_{2,1}^{\frac{1}{2}}}.
\end{align*}

By multiplying by $2^{\frac{j}{2}}$, summing up on $j\in\Z$ and integrating between $0$ and $t$, we then deduce : 
$$\displaylines{\|(\delta v_\varepsilon,\varepsilon \delta w_\varepsilon)(t)\|_{\dot B_{2,1}^{\frac{1}{2}}}+\int_0^t \|(\delta v_\varepsilon,\varepsilon \delta w_\varepsilon)(t)\|_{\dot B_{2,1}^{\frac{5}{2}}} d\tau \hfill\cr\lesssim \int_0^t (\|u_{\varepsilon,2}\|_{\dot B_{2,1}^{\frac{5}{2}}}+\|v_{\varepsilon,1}\|_{\dot B_{2,1}^{\frac{5}{2}}\cap\dot B_{2,1}^{\frac{7}{2}}})\|(\delta v_\varepsilon,\varepsilon \delta w_\varepsilon)\|_{\dot B_{2,1}^{\frac{1}{2}}} d\tau \hfill\cr\hfill+\int_0^t \|\delta v_\varepsilon\|_{\dot B_{2,1}^{\frac{5}{2}}}\|v_{\varepsilon,1}\|_{\dot B_{2,1}^{\frac{3}{2}}}  d\tau.}  $$

By smallness of $\|v_{\varepsilon,1}\|_{\dot B_{2,1}^{\frac{3}{2}}}$, we then deduce : 
$$\displaylines{\|(\delta v_\varepsilon,\varepsilon \delta w_\varepsilon)(t)\|_{\dot B_{2,1}^{\frac{1}{2}}}+\int_0^t \|(\delta v_\varepsilon,\varepsilon \delta w_\varepsilon)(t)\|_{\dot B_{2,1}^{\frac{5}{2}}} d\tau \hfill\cr\lesssim \int_0^t (\|u_{\varepsilon,2}\|_{\dot B_{2,1}^{\frac{5}{2}}}+\|v_{\varepsilon,1}\|_{\dot B_{2,1}^{\frac{5}{2}}\cap\dot B_{2,1}^{\frac{7}{2}}})\|(\delta v_\varepsilon,\varepsilon \delta w_\varepsilon)\|_{\dot B_{2,1}^{\frac{1}{2}}} d\tau.}  $$

By Grönwall's lemma and the fact that $t\mapsto \|u_{\varepsilon,2}(t)\|_{\dot B_{2,1}^{\frac{5}{2}}}+\|v_{\varepsilon,1}(t)\|_{\dot B_{2,1}^{\frac{5}{2}}\cap \dot B_{2,1}^{\frac{7}{2}}}$ is in  $L^1(\R^+)$, we then have : $$\forall t\in\R^+, \quad \|(\delta v_\varepsilon,\varepsilon \delta w_\varepsilon)(t)\|_{\dot B_{2,1}^{\frac{1}{2}}}=0,$$ 
whence uniqueness.
\subsection{Passing to the limit between the two systems}~\\
We want to study the equation verified by the difference between the solutions $(v_\varepsilon,w_\varepsilon)$ of \eqref{NS remise à l'échelle} and that of the primitive equation \eqref{Equations primitives} for $(v,w)$. We set up $$(V_\varepsilon,\varepsilon W_\varepsilon)\mathrel{\mathop:}=(v_\varepsilon-v,\varepsilon(w_\varepsilon-w)), \ U_\varepsilon\mathrel{\mathop:}=(V_\varepsilon,W_\varepsilon), \ P_\varepsilon\mathrel{\mathop:}=p_\varepsilon-p.$$
The system satisfied by $(V_\varepsilon,\varepsilon W_\varepsilon)$ is : 
\begin{eqnarray}\label{système principal}
    \left\{\begin{array}{l}
    \partial_t V_\varepsilon-\Delta V_\varepsilon+\nabla_H P_\varepsilon= -U_\varepsilon\cdot \nabla v-u_\varepsilon\cdot \nabla V_\varepsilon, \\
    \partial_t(\varepsilon W_\varepsilon)-\Delta(\varepsilon W_\varepsilon)+\frac{1}{\varepsilon}\partial_z P_\varepsilon =\varepsilon F(U_\varepsilon,u_\varepsilon,u),
    \end{array}
    \right.
\end{eqnarray} where $F(U_\varepsilon,u_\varepsilon,u)=-U_\varepsilon\cdot \nabla w-u_\varepsilon\cdot \nabla W_\varepsilon-\partial_t w-u\cdot \nabla w+\Delta w$.

With the help of \eqref{système principal} , we have : $$ \displaylines{\|V_\varepsilon(t)\|_{\dot B_{2,1}^{\frac{1}{2}}\cap \dot B_{2,1}^\frac{3}{2}}+\int_0^t \|V_\varepsilon\|_{\dot B_{2,1}^{\frac{5}{2}}\cap \dot B_{2,1}^\frac{7}{2}} d\tau \hfill\cr  \lesssim \|\overline{v}_0-v_0\|_{\dot B_{2,1}^{\frac{1}{2}}\cap \dot B_{2,1}^{\frac{3}{2}}} +\int_0^t \|U_\varepsilon\cdot \nabla v\|_{\dot B_{2,1}^{\frac{1}{2}}\cap \dot B_{2,1}^\frac{3}{2}} d\tau +\int_0^t \|u_\varepsilon\cdot \nabla V_\varepsilon\|_{\dot B_{2,1}^{\frac{1}{2}}\cap \dot B_{2,1}^\frac{3}{2}} d\tau.}$$

By the same calculations as in the proof of Lemma \ref{lemme terme non linéaire 1/2}, we have : $$\displaylines{\int_0^t \|U_\varepsilon\cdot \nabla v\|_{\dot B_{2,1}^{\frac{1}{2}}\cap \dot B_{2,1}^\frac{3}{2}} d\tau +\int_0^t \|u_\varepsilon\cdot \nabla V_\varepsilon\|_{\dot B_{2,1}^{\frac{1}{2}}\cap \dot B_{2,1}^\frac{3}{2}} d\tau \hfill\cr\hfill \lesssim \int_0^t \|V_\varepsilon\|_{\dot B_{2,1}^{\frac{1}{2}}\cap \dot B_{2,1}^\frac{3}{2}}\|v\|_{\dot B_{2,1}^{\frac{5}{2}}\cap \dot B_{2,1}^\frac{7}{2}} d\tau +\int_0^t \|u_\varepsilon\|_{\dot B_{2,1}^{\frac{1}{2}}\cap \dot B_{2,1}^\frac{3}{2}} \|V_\varepsilon\|_{\dot B_{2,1}^{\frac{5}{2}}\cap \dot B_{2,1}^\frac{7}{2}} d\tau \hfill\cr\hfill \lesssim \alpha\|V_\varepsilon\|_{L_t^\infty(\dot B_{2,1}^{\frac{1}{2}}\cap \dot B_{2,1}^\frac{3}{2})}+\alpha \int_0^t \|V_\varepsilon\|_{\dot B_{2,1}^{\frac{5}{2}}\cap \dot B_{2,1}^{\frac{7}{2}}} d\tau .} $$

We obtain : $$\displaylines{\|V_\varepsilon(t)\|_{L_t^\infty(\dot B_{2,1}^{\frac{1}{2}}\cap \dot B_{2,1}^\frac{3}{2})}+\int_0^t \|V_\varepsilon\|_{\dot B_{2,1}^{\frac{5}{2}}\cap \dot B_{2,1}^\frac{7}{2}} d\tau \hfill\cr  \lesssim \|\overline{v}_0-v_0\|_{\dot B_{2,1}^{\frac{1}{2}}\cap \dot B_{2,1}^{\frac{3}{2}}}+\alpha \|V_\varepsilon\|_{L_t^\infty(\dot B_{2,1}^{\frac{1}{2}}\cap \dot B_{2,1}^{\frac{3}{2}})} +\alpha \int_0^t \|V_\varepsilon\|_{\dot B_{2,1}^{\frac{5}{2}}\cap \dot B_{2,1}^{\frac{7}{2}}}d\tau.}$$ 
As the last two terms of the right-hand side are negligible compared to those of the left-hand side for $\alpha$ small enough, we obtain for all $t\in\R_+$ : $$\|V_\varepsilon\|_{L_t^\infty(\dot B_{2,1}^{\frac{1}{2}}\cap \dot B_{2,1}^\frac{3}{2})}+\int_0^t \|V_\varepsilon\|_{\dot B_{2,1}^{\frac{5}{2}}\cap \dot B_{2,1}^\frac{7}{2}} d\tau  \lesssim \varepsilon,$$

 whence the result.

\appendix
\section{}
We recall here classical lemmas on differential equations and two Poincaré inequalities in the vertical direction.
\begin{lemma}\label{lemme edo}
Let $\displaystyle X:[0,T]\rightarrow \R_+$ a continuous function such that $X^2$ is derivable. Suppose there is a constant $c\geq 0$ and a measurable function $A:[0,T]\rightarrow \R_+$ such that $$\frac{1}{2}\frac{d}{dt}X^2+c X^2\leq A X \quad pp\ \text{on} \ [0,T].$$
Then, for all $t\in [0,T]$, we have: $$X(t)+c\int_0^t X(\tau)\,d\tau\leq X_0 +\int_0^t A(\tau) \,d\tau.$$
\end{lemma}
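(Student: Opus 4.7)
The plan is to reduce the inequality on $X^2$ to an inequality on $X$ itself, the only subtlety being that $X$ may fail to be differentiable at points where it vanishes (even though $X^2$ is). I will handle this by a standard regularization: for $\delta>0$ set
$$X_\delta(t) \mathrel{\mathop:}= \sqrt{X^2(t)+\delta^2}.$$
Since $X^2$ is differentiable, so is $X_\delta$; moreover $X_\delta \ge \delta > 0$, $X_\delta \ge X$, and $X/X_\delta \le 1$ everywhere on $[0,T]$.

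Differentiating the identity $X_\delta^2 = X^2+\delta^2$ yields $2X_\delta X_\delta' = (X^2)'$. Dividing the hypothesis
$$\frac{1}{2}\frac{d}{dt}X^2 + cX^2 \le AX$$
by the strictly positive function $X_\delta$ then gives, almost everywhere on $[0,T]$,
$$X_\delta' + c\frac{X^2}{X_\delta} \le A\frac{X}{X_\delta} \le A,$$
where the second inequality uses $X/X_\delta \le 1$ together with $A\ge 0$. Since the left-hand side controls $X_\delta'$, integrating between $0$ and $t$ produces
$$X_\delta(t) + c\int_0^t \frac{X^2(\tau)}{X_\delta(\tau)}\,d\tau \le X_\delta(0) + \int_0^t A(\tau)\,d\tau.$$

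The last step is to let $\delta\to 0^+$. Pointwise, $X_\delta(t)\to X(t)$ and $X_\delta(0)\to X_0$. For the integral term, for each fixed $\tau$ the function $\delta\mapsto X_\delta(\tau)$ is decreasing as $\delta\downarrow 0$, hence $\delta \mapsto X^2(\tau)/X_\delta(\tau)$ is increasing and converges to $X(\tau)$; by the monotone convergence theorem,
$$\int_0^t \frac{X^2(\tau)}{X_\delta(\tau)}\,d\tau \;\longrightarrow\; \int_0^t X(\tau)\,d\tau,$$
which yields the claimed inequality. The only genuine obstacle is precisely the lack of differentiability of $X$ on the zero set $\{X=0\}$, and the whole purpose of introducing $X_\delta$ is to bypass this issue cleanly without any assumption on the structure of that set.
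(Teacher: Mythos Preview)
The paper does not actually supply a proof of this lemma: it is merely recalled in the appendix as a ``classical lemma on differential equations'' with no argument given. Your regularization $X_\delta=\sqrt{X^2+\delta^2}$ is the standard way to establish it, and your proof is correct; the passage to the limit via monotone convergence is clean, and the only genuine obstacle (non-differentiability of $X$ on its zero set) is precisely what the regularization circumvents.
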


The following result is classic: see for example \cite{RD}.
\begin{lemma}\label{lemme edo2}
    Let $T>0$. Let $\mathcal{L}:[0,T]\to \R$ and $H:[0,T]\to \R$ two positive continuous functions on $[0,T]$ such that $\mathcal{L}(0)< \alpha$ with $\displaystyle\alpha\in ]0,\frac{c}{2C}[$ and $$\mathcal{L}(t)+c\int_0^t \mathcal{H}(\tau)d\tau \leq \mathcal{L}_0+C\int_0^t \mathcal{L}(\tau)\mathcal{H}(\tau)d\tau.$$
    Then, for all $t\in[0,T]$, we have : $$\mathcal{L}(t)+\frac{c}{2}\int_0^t \mathcal{H}(\tau)d\tau \leq \mathcal{L}(0).$$
\end{lemma}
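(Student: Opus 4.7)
The plan is to run a continuity (bootstrap) argument driven by the smallness condition $\mathcal{L}\leq c/(2C)$. The starting observation is purely algebraic: \emph{if} one knows \emph{a priori} that $\mathcal{L}(\tau)\leq c/(2C)$ on some subinterval $[0,t]$, then the integrand on the right-hand side of the hypothesis satisfies $C\mathcal{L}(\tau)\mathcal{H}(\tau)\leq (c/2)\mathcal{H}(\tau)$. Plugging this bound into the assumed inequality and absorbing the $\mathcal{H}$-integral into the left-hand side yields
$$\mathcal{L}(t)+\frac{c}{2}\int_0^t \mathcal{H}(\tau)\,d\tau \leq \mathcal{L}(0),$$
which is exactly the desired conclusion. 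In particular it gives the pointwise estimate $\mathcal{L}(t)\leq \mathcal{L}(0)<\alpha<c/(2C)$, so the algebraic argument \emph{strictly improves} the hypothesis used as input.

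To upgrade this conditional statement to a global one, I would introduce
$$I \mathrel{\mathop:}= \bigl\{t\in[0,T] \ : \ \mathcal{L}(\tau)\leq c/(2C) \text{ for all } \tau\in[0,t]\bigr\}$$
and set $T^{*}\mathrel{\mathop:}=\sup I$. Continuity of $\mathcal{L}$ together with $\mathcal{L}(0)<\alpha<c/(2C)$ guarantees that $T^{*}>0$ and that $I=[0,T^{*}]$. Assume by contradiction $T^{*}<T$. The algebraic step of the previous paragraph applies on $[0,T^{*}]$ and yields $\mathcal{L}(t)\leq \mathcal{L}(0)<\alpha<c/(2C)$ for every $t\in[0,T^{*}]$, with \emph{strict} inequality at the endpoint $t=T^{*}$. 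By continuity of $\mathcal{L}$, the inequality $\mathcal{L}\leq c/(2C)$ then persists on $[0,T^{*}+\eta]$ for some $\eta>0$, contradicting the maximality of $T^{*}$. Hence $T^{*}=T$, and one last application of the algebraic step on $[0,T]$ produces the announced bound.

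The only subtle ingredient is the \emph{strict} inequality $\mathcal{L}(0)<\alpha<c/(2C)$: without this cushion one could not reopen the bootstrap slightly past $T^{*}$. Everything else is routine, so I do not expect any real obstacle beyond verifying carefully that the condition $\mathcal{L}_0<\alpha$ is used precisely to initialize and to propagate the continuity argument.
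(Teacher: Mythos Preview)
Your bootstrap argument is correct and is exactly the standard proof of this kind of continuation lemma. Note that the paper does not actually supply a proof of this statement: it simply labels the result as ``classic'' and points to \cite{RD}, so there is nothing further to compare against.
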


We recall two of Poincaré's inequalities: 
\begin{lemma}\label{inégalité poincaré-wirtinger}~\\
    Let $f\in \mathcal{C}_0^\infty(\Omega)$, we have : $$\left|f(\cdot,z)-\frac{1}{2}\int_{-1}^1 f(\cdot, z) \ dz\right|\leq 2 |\partial_z f(\cdot, z)|.$$
    Moreover, if $f$ is odd with respect to the vertical variable, then we have $$|f(\cdot,z)|\leq 2 |\partial_z f(\cdot, z)|.$$ 
\end{lemma}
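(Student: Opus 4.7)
The plan is to derive both inequalities directly from the fundamental theorem of calculus, using periodicity together with the vanishing-mean property to locate a zero of $f-\bar f$ in the first case, and the oddness condition to place that zero at $z=0$ in the second.

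For the Poincaré--Wirtinger bound, I would set
\[
\bar f(x,y) := \frac{1}{2}\int_{-1}^1 f(x,y,z')\,dz'.
\]
Since the map $z \mapsto f(x,y,z) - \bar f(x,y)$ is continuous and has vanishing mean on $(-1,1)$, for each horizontal point $(x,y)$ there exists some $z_0 = z_0(x,y) \in (-1,1)$ with $f(x,y,z_0) = \bar f(x,y)$. The fundamental theorem of calculus then gives
\[
f(x,y,z) - \bar f(x,y) = \int_{z_0(x,y)}^{z} \partial_z f(x,y,z')\,dz',
\]
and since the integration interval has length at most $2$ one recovers the claimed factor.

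For the odd case the argument is even shorter: oddness of $f$ in $z$ forces $\bar f \equiv 0$ (integral of an odd function over a symmetric interval) and $f(\cdot,0) = 0$ by continuity. Therefore
\[
f(x,y,z) = \int_0^z \partial_z f(x,y,z')\,dz',
\]
and the same length-of-integration argument produces the constant $2$.

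The only real subtlety is to interpret the right-hand side in the functional sense required by the corollary $\|w\|_{\dot B_{2,1}^s}\leq \|\partial_z w\|_{\dot B_{2,1}^s}$ actually used in the body of the paper. For this I would apply the two pointwise identities above dyadic block by dyadic block, and combine them with Minkowski's inequality in $z$ to obtain $\|f(x,y,\cdot)\|_{L^2_z}\lesssim \|\partial_z f(x,y,\cdot)\|_{L^2_z}$ uniformly in $(x,y)$; Fubini and the standard characterization of $\dot B_{2,1}^s$ then upgrade this into the corresponding Besov estimate, and the bound extends from $\mathcal{C}_0^\infty(\Omega)$ by density. No genuine obstacle is expected, the whole statement being a routine consequence of the fundamental theorem of calculus once one identifies the correct reference point where $f-\bar f$ vanishes.
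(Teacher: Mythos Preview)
The paper does not supply a proof of this lemma; it is merely stated in the appendix as a recalled classical fact. So there is nothing to compare your argument against, and the relevant question is simply whether your reasoning is sound.

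Your fundamental-theorem-of-calculus argument is the natural one, but note that it does \emph{not} yield the inequality as literally written. From
\[
f(x,y,z)-\bar f(x,y)=\int_{z_0}^{z}\partial_z f(x,y,z')\,dz'
\]
what you actually obtain is $|f(\cdot,z)-\bar f|\le \|\partial_z f(\cdot,\cdot)\|_{L^1_z}\le 2\|\partial_z f(\cdot,\cdot)\|_{L^\infty_z}$, not the pointwise bound $\le 2|\partial_z f(\cdot,z)|$ at the \emph{same} $z$. Indeed the pointwise statement as printed is false: take any nonconstant $f$ and evaluate at a critical point $z$ where $\partial_z f(\cdot,z)=0$ but $f(\cdot,z)\neq\bar f$. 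You were right to flag the interpretation issue rather than claim to have proved the displayed inequality verbatim.

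What the paper actually uses, and what you go on to establish, is the Besov-norm consequence $\|f\|_{\dot B^s_{2,1}}\lesssim\|\partial_z f\|_{\dot B^s_{2,1}}$ for $f$ odd in $z$. Your route---apply the vertical Poincar\'e inequality to each $\dot\Delta_j f$ (which inherits oddness since the Littlewood--Paley multiplier is radial), use that $\partial_z$ commutes with $\dot\Delta_j$, then multiply by $2^{js}$ and sum---is correct. A slightly cleaner way to get the $L^2_z$ step than Minkowski is to expand in the vertical Fourier series: oddness kills the zero mode, so $\|f\|_{L^2_z}\le\pi^{-1}\|\partial_z f\|_{L^2_z}$ directly. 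Either way, the inequality $\|w\|_{\dot B^s_{2,1}}\le\|\partial_z w\|_{\dot B^s_{2,1}}$ invoked throughout Section~3 follows.
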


\begin{lemma}\label{inégalité poincaré}~\\
    Let $f\in \mathcal{C}_0^\infty(\Omega)$, we have: $$\|f\|_{L^2(\Omega)}\leq 2 \|\partial_z f\|_{L^2(\Omega)}.$$
\end{lemma}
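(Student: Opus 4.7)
The plan is to reduce the three-dimensional statement to a one-dimensional Poincaré inequality on $(-1,1)$ by integrating slice by slice over the horizontal variables $(x,y)\in\Omega_h$. The whole argument is elementary since the only ingredient is that $f\in\mathcal{C}_0^\infty(\Omega)$ vanishes at $z=\pm 1$ (its support is a compact subset of the open set $\Omega$, hence stays strictly inside the slab $\{-1<z<1\}$).

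First, I fix an arbitrary horizontal point $(x,y)$ and view $z\mapsto f(x,y,z)$ as a smooth function on $(-1,1)$ that is zero at the endpoints. By the fundamental theorem of calculus,
$$f(x,y,z)=\int_{-1}^{z}\partial_z f(x,y,s)\,ds.$$
The Cauchy--Schwarz inequality applied to this one-dimensional integral gives
$$|f(x,y,z)|^2\leq (z+1)\int_{-1}^{z}|\partial_z f(x,y,s)|^2\,ds\leq 2\int_{-1}^{1}|\partial_z f(x,y,s)|^2\,ds.$$

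Next, I integrate this pointwise bound in $z$ over $(-1,1)$: since the right-hand side no longer depends on $z$ and the length of the interval is $2$,
$$\int_{-1}^{1}|f(x,y,z)|^2\,dz\;\leq\;4\int_{-1}^{1}|\partial_z f(x,y,s)|^2\,ds.$$
Finally, integrating in $(x,y)\in\Omega_h$ and applying Fubini yields $\|f\|_{L^2(\Omega)}^2\leq 4\,\|\partial_z f\|_{L^2(\Omega)}^2$, and taking square roots gives the claimed inequality.

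There is no real obstacle here; the only point worth being explicit about is that the compact-support hypothesis (or equivalently the vanishing at $z=\pm 1$) is exactly what allows the representation $f(\cdot,z)=\int_{-1}^{z}\partial_z f(\cdot,s)\,ds$, which is the mechanism for gaining one $z$-derivative. The constant $2$ that appears is simply half of the width of the vertical interval and is consistent with the pointwise odd-function inequality $|f(\cdot,z)|\leq 2|\partial_z f(\cdot,z)|$ of the previous lemma, although the argument given here does not require any parity assumption on $f$.
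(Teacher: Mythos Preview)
Your argument is correct: writing $f(x,y,z)=\int_{-1}^z\partial_z f(x,y,s)\,ds$ from the vanishing at $z=-1$, applying Cauchy--Schwarz, and then integrating over $z$ and over $(x,y)$ yields exactly the constant $2$. The paper does not supply a proof of this lemma (it is merely recalled in the appendix as a classical Poincar\'e inequality), so there is nothing to compare your approach against; what you wrote is the standard elementary derivation.
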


\section{}
Here we recall the construction of Besov spaces and some of their properties.

In this article, we used a classical decomposition in Fourier space, called Littlewood Paley's homogeneous dyadic decomposition $(\dot\Delta_j)_{j\in\Z}$ defined by $\dot\Delta_j\mathrel{\mathop:}= \varphi(2^{-j}D).$ Here, we consider $\varphi$ and $\chi$ two regular functions representing a partition of the unit in $\R$ verifying the proposition 2.10 of \cite{RD} such that $\mathrm{supp} \ \chi\subset B(0,\frac{4}{3})$, $\mathrm{supp} \ \varphi \subset\mathcal{C}\mathrel{\mathop:}=\{\xi\in\R^d,\:  3/4\leq|\xi|\leq 8/3\}$ and satisfying $$\sum_{j\in\Z}\varphi(2^{-j}\xi)=1,\qquad \xi\not=0.$$ 

By construction, $\dot\Delta_j$ is a localization operator around the frequency of magnitude $2^j$.

For all $j\in\Z$, dyadic homogeneous blocks $\dot \Delta_j$ and the low-frequency truncation operator $\dot S_j$ are defined by 

\begin{eqnarray}\label{déf bloc dyadique classique}\dot \Delta_j u \mathrel{\mathop:}=\mathcal{F}^{-1}(\varphi(2^{-j}\cdot)\mathcal{F}u), \quad \dot S_j u\mathrel{\mathop:}=\mathcal{F}^{-1}(\chi(2^{-j}\cdot)\mathcal{F}u),\end{eqnarray} 
where $\mathcal{F}$ and $\mathcal{F}^{-1}$ denote the Fourier transform and its inverse respectively. From now on, we will use the following shorter notation : $$ u_j:=\dot\Delta_j u.$$

Let $\mathcal{S}_h'$ the set of tempered distribution $u$ on $\R^d$ such that $$\displaystyle \underset{j\to -\infty}{\lim}\|\dot S_j u\|_{L^\infty}=0.$$ we then have : $$u=\sum_{j\in \Z} u_j \ \in \mathcal{S}', \quad \dot S_j u=\sum_{j'\leq j-1} u_{j'}, \ \forall u\in \mathcal{S}_h'.$$

With the help of these dyadic blocks, the homogeneous Besov spaces $\dot B_{2,1}^s$ for all $s\in\R$ are defined by : $$\dot B_{2,1}^s\mathrel{\mathop:}=\left\{u\in \mathcal{S}_h' \middle| \|u\|_{\dot B_{2,1}^s}\mathrel{\mathop:}=\|\{2^{js}\|u_j\|_{L^2}\}_{j\in\Z}\|_{l^1}<\infty\right\}.$$

A generalization of these properties on the torus has been realized in \cite{Ref Besov sur le tore 1}, \cite{RD mini-cours} and \cite{Ref Besov sur le tore 2} and we admit their adaptation on $\Omega_2$. In this context, we define \eqref{déf bloc dyadique classique} by : $$\dot\Delta_j u(x,y,z)=\sum_{n\in\Z}\mathcal{F}_{H}^{-1}(\varphi(2^{-j}\cdot,2^{-j}n)\mathcal{F}_H u)(x,y)\times \widehat{u}_n e^{i\pi nz}$$ where  $\displaystyle\widehat{u}_n=\frac{1}{2} \int_{-1}^1 e^{-i \pi nz}u(x,y,z)dz$ and $\mathcal{F}_H$ is the Fourier transform in the horizontal component.

The following lemma is a classical result of product laws on Besov spaces, see for example \cite{BCD}.
\begin{lemma} \label{Produit espace de Besov}
For $d\geq 2$, the numerical product extends into a continuous application from $\dot B_{2,1}^{\frac{d}{2}-1}\times \dot B_{2,1}^{\frac{d}{2}}$ to $\dot B_{2,1}^{\frac{d}{2}-1}$. 

$ \dot B_{2,1}^{\frac{d}{2}}$ is a multiplicative algebra for $d\geq 1$.

For $d\geq 1$, we have for $(u,v)\in \dot B_{2,1}^{\frac{d}{2}}\cap  \dot B_{2,1}^{\frac{d}{2}+1}$ that $uv\in  \dot B_{2,1}^{\frac{d}{2}+1}$ and the following inequality : $$\|uv\|_{ \dot B_{2,1}^{\frac{d}{2}+1}}\lesssim \|u\|_{ \dot B_{2,1}^{\frac{d}{2}}}\|v\|_{ \dot B_{2,1}^{\frac{d}{2}+1}}+\|u\|_{ \dot B_{2,1}^{\frac{d}{2}+1}}\|v\|_{ \dot B_{2,1}^{\frac{d}{2}}}.$$
\end{lemma}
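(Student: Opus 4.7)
The plan is to prove all three assertions using Bony's homogeneous paraproduct decomposition $uv=T_u v+T_v u+R(u,v)$, where $T_u v:=\sum_{j}\dot S_{j-1}u\,\dot\Delta_j v$ has each summand spectrally supported in an annulus of size $2^j$ (so that an arbitrary Besov index can be recovered), while the remainder $R(u,v):=\sum_{|j-j'|\leq 1}\dot\Delta_j u\,\dot\Delta_{j'}v$ only supplies a ball of radius $\lesssim 2^j$, forcing the condition $s_1+s_2>0$ on the sum of input regularities. The structural ingredients I shall use are the embedding $\dot B_{2,1}^{d/2}\hookrightarrow L^\infty$ (Bernstein plus $\ell^1$ summation) and the weaker embedding $\dot B_{2,1}^{d/2-1}\hookrightarrow \dot B_{\infty,\infty}^{-1}$, together with Chemin's standard paraproduct and remainder bounds as developed in \cite{BCD}.

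For the first claim, continuity from $\dot B_{2,1}^{d/2-1}\times\dot B_{2,1}^{d/2}$ to $\dot B_{2,1}^{d/2-1}$, I estimate each of the three pieces separately. The term $T_v u$ is bounded through $\|T_v u\|_{\dot B_{2,1}^{d/2-1}}\lesssim \|v\|_{L^\infty}\|u\|_{\dot B_{2,1}^{d/2-1}}$ and the embedding $\dot B_{2,1}^{d/2}\hookrightarrow L^\infty$. The dual piece $T_u v$ needs the sharper paraproduct inequality $\|T_u v\|_{\dot B_{2,1}^{d/2-1}}\lesssim \|u\|_{\dot B_{\infty,\infty}^{-1}}\|v\|_{\dot B_{2,1}^{d/2}}$ (which loses one derivative on the first factor) and is then absorbed by $\dot B_{2,1}^{d/2-1}\hookrightarrow\dot B_{\infty,\infty}^{-1}$. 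Finally the remainder obeys $\|R(u,v)\|_{\dot B_{2,1}^{s_1+s_2-d/2}}\lesssim \|u\|_{\dot B_{2,1}^{s_1}}\|v\|_{\dot B_{2,1}^{s_2}}$ whenever $s_1+s_2>0$; the choice $(s_1,s_2)=(d/2-1,d/2)$ gives $s_1+s_2=d-1>0$ for $d\geq 2$, and output index exactly $d/2-1$.

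The algebra statement is obtained from the same scheme with $s_1=s_2=d/2$: both paraproducts are estimated by $L^\infty\times \dot B_{2,1}^{d/2}$, and the remainder lands in $\dot B_{2,1}^{d/2}$ because $d>0$. The tame estimate at shifted index $d/2+1$ is proved analogously: one bounds $T_u v$ by $\|u\|_{L^\infty}\|v\|_{\dot B_{2,1}^{d/2+1}}\lesssim \|u\|_{\dot B_{2,1}^{d/2}}\|v\|_{\dot B_{2,1}^{d/2+1}}$, the symmetric piece $T_v u$ produces the second term of the right-hand side, and the remainder is handled by choosing $(s_1,s_2)=(d/2,d/2+1)$, so that $s_1+s_2-d/2=d/2+1$ is exactly the target index.

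The main technical obstacle is the remainder $R(u,v)$. When computing $\dot\Delta_k R(u,v)$, only blocks with $j\gtrsim k$ contribute, so convergence in $k$ must be extracted from Bernstein inequalities summed against a geometric factor $2^{(k-j)d/2}$ multiplied by $\|u_j\|_{L^2}\|v_j\|_{L^2}$; this is precisely why the condition $s_1+s_2>0$ is unavoidable. A secondary subtlety, peculiar to the geometries $\Omega_1$ and $\Omega_2$, is that the Littlewood--Paley decomposition is of mixed type (continuous Fourier in the horizontal variables, Fourier series in $z$), so one has to invoke the adapted para-calculus of \cite{Ref Besov sur le tore 1, RD mini-cours, Ref Besov sur le tore 2}; however, since every estimate ultimately reduces to Bernstein bounds on annuli and balls in the spectrum, the argument transfers essentially without modification.
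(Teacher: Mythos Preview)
Your argument is correct and follows the standard Bony paraproduct route that the cited reference \cite{BCD} itself uses; note, however, that the paper does not supply a proof of this lemma at all but simply quotes it as a classical product law, so there is nothing to compare beyond the fact that your sketch is precisely the proof one finds in that reference.
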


\end{document}